\def\@InfoBox{%
  \if!\@keyAMSClassification!\else%
  \par\noindent{\textbf{MSC 2020: }%
    \@keyAMSClassification}\fi%
  \if!\@keyJELClassification!\else%
  \par\noindent{\textbf{JEL: }\@keyJELClassification}\fi%
  \if!\@Journal!\else\par\noindent\@Journal\fi%
  \if!\@DOI!\if!\@ArXiv!\else%
  \par{\href{http://arxiv.org/abs/\@ArXiv}{\normalcolor
      arXiv:\@ArXiv}}\fi%
  \else\par\noindent\@DOI\if!\@ArXiv!\else%
  \noindent{, \href{http://arxiv.org/abs/\@ArXiv}{\normalcolor
      arXiv:\@ArXiv}}\fi\fi%
  \if!\@history!\else%
  \def\iso@languagename{german}\dategerman\daymonthsepgerman{}
  \monthyearsepgerman{}{}%
  \par\@history\fi%
}
\definecolor{blendedblue}{rgb}{0.2,0.2,0.7}
\DeclareSymbolFont{symbolsC}{U}{pxsyc}{m}{n}
\DeclareMathSymbol{\medcirc}{\mathbin}{symbolsC}{7}
\DeclareMathSymbol{\medbullet}{\mathbin}{symbolsC}{8}
\DeclareFontFamily{U}{mathb}{\hyphenchar\font45}
\DeclareFontShape{U}{mathb}{m}{n}{
<-6> mathb5 <6-7> mathb6 <7-8> mathb7
<8-9> mathb8 <9-10> mathb9
<10-12> mathb10 <12-> mathb12
}{}
\DeclareSymbolFont{mathb}{U}{mathb}{m}{n}
\DeclareMathSymbol{\llcurly}{\mathrel}{mathb}{"CE}
\DeclareMathSymbol{\ggcurly}{\mathrel}{mathb}{"CF}
\DeclareMathOperator{\ca}{ca}
\begin{document}

\title{Semistatic robust utility indifference valuation and robust
  integral functionals}
\author{Keita Owari}

\address{Center for Advanced
  Research in Finance, and\\
  Graduate School of Economics,\\
  The University of Tokyo\newline 7-3-1 Hongo, Bunkyo-ku, Tokyo
  113-0033, JP }%
\thanks{This work is supported by CARF (Center for Advanced Research
  in Finance) in Graduate School of Economics of The University of
  Tokyo.}%
\email{owari@e.u-tokyo.ac.jp}

\runauthor{K. Owari}

\runtitle{Semistatic robust indifference valuation and integral functionals}

\CurrVer{\today}

\MSC{46N10, 91G80, 46E25, 46E27}%
\keywords{Integral functionals, semistatic strategies, robust utility, indifference valuation}

\abstract{%
  We consider a discrete-time robust utility maximisation with
  \emph{\bfseries semistatic strategies}, and the associated
  indifference prices of exotic options.  For this purpose, we
  introduce a robust form of convex integral functionals on the space
  of bounded continuous functions on a Polish space, and establish
  some key regularity and representation results, in the spirit of the
  classical Rockafellar theorem, in terms of the duality formed with
  the space of Borel measures. These results (together with the
  standard Fenchel duality and minimax theorems) yield a duality for
  the robust utility maximisation problem as well as a representation
  of associated indifference prices, where the presence of static
  positions in the primal problem appears in the dual problem as a
  \emph{\bfseries marginal constraint} on the martingale
  measures. Consequently, the resulting indifference prices are
  consistent with the observed prices of vanilla options.}

\maketitle

\section{Introduction}
\label{sec:Intro}

This paper consists of two parts. The first part is concerned with the
following form of \emph{\bfseries robust convex integral functionals}:
\begin{equation}
  \label{eq:IntegFunctIntro}
  I_{\varphi,\PC}(f):=\sup_{P\in\PC}\int_\Omega \varphi(\omega,f(\omega))P(d\omega),
  \quad f\in C_b(\Omega),
\end{equation}
where $\Omega$ is a Polish space,
$\varphi:\Omega\times\RB\rightarrow\RB$ is a normal convex integrand,
and $\PC$ is a convex set of Borel probability measures on $\Omega$
that is compact for the weak topology induced by $C_b(\Omega)$. This
is a robust version of convex integral functionals. In the classical
case with $\PC$ being a singleton, say $\{\PB\}$, \citet{MR0310612}
considered (among many others) an integral functional
$I_{\varphi,\{\PB\}}=:I_{\varphi,\PB}$ on $L_\infty(\PB)$ and, under
mild integrability assumptions on $\varphi$, found its conjugate on
$L_\infty(\PB)'$ in the form:
\begin{equation*}
  \label{eq:RockafellarClassical}
  I_{\varphi,\PB}^*(\nu)=I_{\varphi^*,\PB}(d\nu_r/d\PB)
  +\sup_{\zeta\in L_\infty(\PB), I_{\varphi,\PB}(\zeta)<\infty}
  \nu_s(\zeta),
\end{equation*}
where $\nu=\nu_r+\nu_s$ is the (unique) Yosida-Hewitt decomposition of
$\nu\in L_\infty(\PB)'$ into regular ($\sigma$-additive) part $\nu_r$
and singular (purely finitely additive) part $\nu_s$, and
$\varphi^*(\omega,y)=\sup_{x\in\RB}(xy-\varphi(\omega,x))$.  In
particular, if $I_{\varphi,\PB}$ is finite everywhere on
$L_\infty(\PB)$, the second (singular) term is trivial, and
$I_{\varphi,\PB}$ is continuous for the Mackey topology
$\tau(L_\infty(\PB),L_1(\PB))$.  Similar representations on other
decomposable spaces of measurable functions (including
($\sigma$-finite) Orlicz spaces; e.g. \citep{MR577677}) are found as
well, and \citep{MR0310612} and \citep{MR3730431} obtained similar
results in the non-decomposable space $C_0(X)$.  For more information,
see \citep[Ch.~14]{MR1491362} and \citep{MR0512209}.

In Section~\ref{sec:RobInteg}, we establish a result in the spirit of
Rockafellar on the regularity and representation for the robust
integral functionals of the form (\ref{eq:IntegFunctIntro}) in terms
of the duality $\langle C_b(\Omega),\ca(\Omega)\rangle$, where
$\ca(\Omega)$ is the Banach space of (finite signed) Borel measures on
$\Omega$.  Specifically, under reasonable assumptions, we show that
$I_{\varphi,\PC}$ is continuous for the Mackey topology
$\tau(C_b(\Omega),\ca(\Omega))$, and that the conjugate, on
$\ca(\Omega)$, is a robust divergence functional associated to the
conjugate of $\varphi$ (Theorem~\ref{thm:Rockafellar1}). A similar
robust integral functional is considered in \citep{MR3400157}, but
there the set $\PC$ is supposed to be dominated by a single
probability measure $\PB$ and the domain space is $L_\infty(\PB)$
(which is decomposable), while we do not suppose $\PC$ is dominated,
and we chose $C_b(\Omega)$ (which is not decomposable) for the domain
space.

Our motivation for the robust integral functionals is to establish a
duality between a robust partial hedging and valuation problem for
exotic options \emph{\bfseries consistent with the observed prices of
  vanilla options}.  This is the content of the second part. In
Section~\ref{sec:RobUtil}, we consider a discrete-time robust utility
maximisation problem with semistatic strategies.  Basic ingredients
are the path-space $\RB^N$, the coordinate process
$(S_i)_{1\leq i\leq N}=\mathrm{id}_{\RB^N}$ with $S_0\equiv s_0$
(constant), and a sequence $(\mu_i)_{1\leq i\leq N}$ of distributions
on $\RB$ such that the set $\MC_\mu$ of martingale measures $Q$ for
$S$ with the marginal constraint
\begin{equation}
  \label{eq:MarginalsIntro}
  Q\circ S_i^{-1}=\mu_i,\quad i=1,...,N,
\end{equation}
is non-empty.  Each $Q\in\MC_\mu$ is thought of as a calibrated
pricing measure.  By a \emph{\bfseries semistatic} strategy, we mean a
pair $(H,(f_i)_{i\leq N})$ of predictable process $H=(H_i)_{i\leq N}$
and $(f_i)_{i\leq N}\in C_b(\RB)^N$, where each $f_i$ is viewed as a
vanilla option maturing at $i$ with payoff $f_i(S_i)$, which is
supposed to be priced in the market at
$\mu_i(f_i):=\int_\RB f_i d\mu_i$; so the gain from investing in
$(H,(f_i)_{i\leq N})$ is
\begin{align*}
  \sum\nolimits_{i\leq N}H_i(S_i-S_{i-1})+\sum\nolimits_{i\leq N}\left(f_i(S_i)-\mu_i(f_i)\right)
  =:H\bullet S_N+\Gamma_{(f_i)_{i\leq N}}.
\end{align*}
Then given a utility function $U:\RB\rightarrow\RB$, a possibly
non-dominated set $\PC$ of Borel probability measures on $\RB^N$, and
an exotic option specified by a real function $\Psi$ on the path-space
$\RB^N$, the basic robust utility maximisation problem is:
\begin{equation}
  \label{eq:RobUtilIntro}
  u_\Psi(x)=\sup_\xi\inf_{P\in\PC}\EB_P\left[U\left(x+\xi-\Psi\right)\right],\quad x\in\RB,
\end{equation}
where $\xi$ runs through (the gains from) suitable semi-static
strategies. (The precise formulation will be given in Section~\ref{sec:RobUtil}.)
This problem induces seller's and buyer's indifference prices of
$\Psi$; namely
\begin{align*}
  p_U^\mathrm{sell}(\Psi):=\inf\left\{x\in\RB: u_\Psi(x)\geq u_0(0)\right\};\quad
  p_U^\mathrm{buy}(\Psi)  =-p_U^\mathrm{sell}(-\Psi)\leq p_U^\mathrm{sell}(\Psi).
\end{align*}
As other indifference prices (see e.g. \citep{henderson09:_utilit});
$p_U^\mathrm{sell}(\Psi)$ is to be understood as the minimal price $p$
of $\Psi$ such that selling $\Psi$ at $p$ yields a better utility than
doing nothing.

By means of the regularity and representation results in
Section~\ref{sec:RobInteg}, we provide, in
Theorem~\ref{thm:DiscreteDuality}, a dual representation of the value
function $u_\Psi(x)$ where the dual problem is a minimisation over
$\MC_\mu$ of a certain robust divergence functional.  This duality
result yields a representation of the associated indifference prices
of $\Psi$:
\begin{align*}
  p_U^\mathrm{sell}(\Psi)=\sup_{Q\in\MC_\mu}\left(\EB_Q[\Psi]-\gamma_{V,\PC}(Q)\right),
\end{align*}
where $\gamma_{V,\PC}$ is a certain positive convex function on the
set of probability measures.  In particular,
$p_U^\mathrm{buy}(\Psi), p_U^\mathrm{sell}(\Psi)$ lie in the
\emph{\bfseries model-free pricing bound} in \citep{MR3066985}:
\begin{equation}
  \label{eq:BoundIntro}
  \Bigl[\inf_{Q\in\MC_\mu}\EB_Q[\Psi], \sup_{Q\in\MC_\mu}\EB_Q[\Psi]\Bigr],
\end{equation}
so the indifference prices are, in a certain sense, fair prices
consistent with calibration; see the last paragraph of this
introduction.

\runinhead{Related literature.} There is now a vast literature on
robust utility maximisation (without static positions), either
dominated or not; see \citep{MR4283309} for a survey with extensive
references.  To the best of our knowledge, the duality for the
semistatic robust utility maximisation problem (\ref{eq:RobUtilIntro})
with general utility function $U$ (on $\RB$), and all options
$f_i\in C_b(\RB)$, or (essentially) equivalently all call options
available for static positions at each $i$, is new, where the last
point appears in the dual problem as the \emph{\bfseries full exact
  marginal constraint}. However, \citep{MR3910012} (see also
\cite{MR4166747}) obtained a similar duality (with a different setup
of $\PC$) in the case of exponential utility with finitely many
vanilla options, in that (in our notation) each $f_i$ is restricted to
the span of a finite number of fixed options, say
$\Span(f_{i,k}; k\leq m_k)$, and accordingly, the constraint on
martingale measures in the dual problem is of a weaker form
$\EB_Q[f_{i,k}]=\mu_i(f_{i,k})$ instead of (\ref{eq:MarginalsIntro}).
Some numerical results are also presented in
\citep{pennanen2020optimal} to non-robust semistatic exponential
indifference valuation with finitely many options in illiquid market
(without duality).  Also, in \citep{guillaume:pastel-01002103}, a
robust exponential utility indifference valuation with a single
marginal constraint at the maturity (of exotic option) is considered
in a continuous time uncertain volatility framework.  There a utility
maximisation proof of Strassen's theorem (see (\ref{eq:Strassen1})
below) is also given.

Another related problem that originally inspired us is the
(multi-marginal) \emph{\bfseries martingale optimal transport (MOT)},
which is to minimise $\EB_Q[\Psi]$ over the set $\MC_\mu$. In this
line, \citep{MR3066985} proved that the infimum
$\inf_{Q\in\MC_\mu}\EB_Q[\Psi]$ is attained and is, in financial
terms, equal to the maximum sub-hedging cost for $\Psi$ by the
semistatic strategies (see \citep[Th.~1.1]{MR3066985} for the precise
statement). A similar duality holds for
$\sup_{Q\in\MC_\mu}\EB_Q[\Psi]$ as well with obvious changes.  (See
also \citep{MR3313756,MR3705784} for similar dualities in different
setups, and \citep{MR3456332,MR3706738} for recent developments of
(mainly 2-marginal) MOT.)  These duality results give the interval in
(\ref{eq:BoundIntro}) a clear financial meaning as the \emph{\bfseries
  model-free pricing bound consistent with calibration}. In
particular, our indifference prices can indeed be viewed as fair
prices consistent with calibration, and yield better (or not worse)
bounds at the cost of small hedging error; this was the original point
of view of this study though the quantitative evaluation as well as a
good choice of the set $\PC$ are left for further investigations.
There are also some nonlinear generalisations of MOT
(\citep{MR3959726}, \citep{MR4570376}), typically of the form
$\inf_{Q\in\MC} \left(\EB_Q[\Psi]+\sum_{k\leq N}\rho_k(Q\circ
  S^{-1}_k)\right)$ where $\rho_k$ is a convex penalty function on the
set of Borel probability measures on $\RB$ and $\MC$ is the set of all
martingale measures for $(S_i)_{i\leq N}$ (without marginal
constraint); the case with $\rho_k=\delta_{\{\mu_k\}}$ is the MOT.

\section{Robust Convex Integral Functionals}
\label{sec:RobInteg}

\subsection{Preliminaries}
\label{sec:IntegPrelim}

In this paper, all the vector spaces are real, and when $X$ is a
Banach space, $\BB_X$ denotes its closed unit ball.

In the sequel, $\Omega$ is a \emph{\bfseries Polish space} with the
Borel $\sigma$-field $\BC(\Omega)$, and $C_b(\Omega)$ is the Banach
space of bounded continuous function with
$\|f\|_\infty=\sup_{\omega\in \Omega}|f(\omega)|$, while $\ca(\Omega)$
is the Banach space of (finite signed) Borel measures on $\Omega$ with
$\|\nu\|=|\nu|(\Omega)$. Also, $L_p(\mu):=L_p(\Omega,\BC(\Omega),\mu)$
for positive $\mu\in\ca(\Omega)$, and $\mathrm{Prob}(\Omega)$ denotes
the closed convex subset of $\ca(\Omega)$ consisting of probability
measures (i.e. those $\mu\in\ca(\Omega)$ with $\mu\geq 0$ and
$\mu(\Omega)=1$). For $\mu\in\mathrm{Prob}(\Omega)$, we write
$\EB_\mu[f]$ for $\int_\Omega fd\mu$.  Next, if $\langle X,X'\rangle$
is a (separated) dual system, the weak topology $\sigma(X,X')$
(resp. the Mackey topology $\tau(X,X')$) is the weakest (resp. finest)
locally convex topology on $X$ consistent with the duality
$\langle X,X'\rangle$, i.e. making $X'$ the dual of $X$. More
concretely, $\sigma(X,X')$ is the topology of pointwise convergence on
$X'$ while $\tau(X,X')$ is the topology of uniform convergence on
$\sigma(X',X)$-compact absolutely convex subsets of $X'$; see
\citep[Sec.5.14-18]{MR2378491} or \citep[Ch.~2, Sec.~8-13]{MR0372565}
for details.

\runinhead{The duality $\langle C_b(\Omega),\ca(\Omega)\rangle$.}
\mbox{} The space $\ca(\Omega)$ is (isometrically isomorphic to) a
closed subspace of the dual $C_b(\Omega)'$ (proper unless $\Omega$ is
compact) via $\langle f,\mu\rangle=\int_\Omega fd\mu=:\mu(f)$, which
makes $\langle C_b(\Omega),\ca(\Omega)\rangle$ a (separated) dual
system.  In the sequel, we are basically interested in the duality
$\langle C_b(\Omega),\ca(\Omega)\rangle$, but the proof of
Theorem~\ref{thm:Rockafellar1} below also involves the duality
$\langle C_b(\Omega),C_b(\Omega)'\rangle$. The dual $C_b(\Omega)'$ is
identified as the space of (finite signed) regular Borel measures on
the Stone-\v{C}ech compactification $\beta\Omega$, and $\Omega$ is
(homeomorphic to) a dense $G_\delta$, hence Borel, subset of
$\beta\Omega$. Then $\ca(\Omega)$ is regarded as the subspace of
$C_b(\Omega)'$ consisting of those measures supported by (the image in
$\beta\Omega$ of) $\Omega$. The next lemma describes $\ca(\Omega)$ in
$C_b(\Omega)'$ sorely in terms of $\Omega$ (without passing to
$\beta\Omega$).
\begin{lemma}[\citep{MR2098271}, Prop.~5 on p.~IX.59 or
  \citep{MR2267655}, Th.~7.10.6]
  \label{lem:DualOfCb}
  An $F\in C_b(\Omega)'$ lies in $\ca(\Omega)$, i.e.
  $F(f)=\int_\Omega f d\mu$ for some $\mu\in\ca(\Omega)$ iff for any
  $\varepsilon>0$, there exists a compact set $K\subset \Omega$ such
  that $|F(g)|\leq \varepsilon$ whenever $g\in\BB_{C_b(\Omega)}$ and
  $g=0$ on $K$.
\end{lemma}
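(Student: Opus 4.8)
The plan is to reduce everything to the Riesz representation on the Stone-\v{C}ech compactification, using the identification $C_b(\Omega)\cong C(\beta\Omega)$ recalled just above, under which $F$ is represented by a finite signed regular Borel measure $\nu$ on $\beta\Omega$ via $F(g)=\int_{\beta\Omega}\tilde g\,d\nu$, where $\tilde g\in C(\beta\Omega)$ is the unique continuous extension of $g$. Since $\Omega$ is (homeomorphic to) a Borel subset of $\beta\Omega$ and $\ca(\Omega)$ is exactly the subspace of measures carried by $\Omega$, the desired conclusion $F\in\ca(\Omega)$ amounts precisely to $|\nu|(\beta\Omega\setminus\Omega)=0$. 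The whole argument is then about translating the tightness condition into a statement about where $|\nu|$ sits.

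The necessity direction is routine. If $F(g)=\int_\Omega g\,d\mu$ with $\mu\in\ca(\Omega)$, then $|\mu|$ is a finite Borel measure on a Polish space, hence tight, so given $\varepsilon>0$ I choose a compact $K\subset\Omega$ with $|\mu|(\Omega\setminus K)\leq\varepsilon$; any $g\in\BB_{C_b(\Omega)}$ vanishing on $K$ then satisfies $|F(g)|=|\int_{\Omega\setminus K}g\,d\mu|\leq|\mu|(\Omega\setminus K)\leq\varepsilon$.

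The substance is sufficiency. The key observation is that for compact $K\subset\Omega$ (hence compact, so closed, in $\beta\Omega$) the restriction map $\tilde g\mapsto\tilde g|_\Omega=g$ is an isometric bijection $C(\beta\Omega)\to C_b(\Omega)$ matching $\{g\in C_b(\Omega):g|_K=0\}$ with $\{\phi\in C(\beta\Omega):\phi|_K=0\}$, precisely because $K\subset\Omega$. So the tightness hypothesis says exactly that the functional $\nu$ has norm $\leq\varepsilon$ on the subspace of $C(\beta\Omega)$ of functions vanishing on $K$. I would then invoke the standard fact that for a regular Borel measure $\nu$ on a compact Hausdorff space, $\sup\{|\nu(\phi)|:\phi\in C(\beta\Omega),\ \|\phi\|_\infty\leq 1,\ \phi|_K=0\}=|\nu|(\beta\Omega\setminus K)$: the bound $\leq$ is immediate, while $\geq$ follows from inner regularity of $|\nu|$ on the open set $\beta\Omega\setminus K$ together with Urysohn's lemma, which produces continuous functions supported off $K$ (hence vanishing on $K$) recovering the Hahn--Jordan structure of $\nu$ on $\beta\Omega\setminus K$ up to arbitrarily small error. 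Thus the hypothesis yields, for every $\varepsilon>0$, a compact $K\subset\Omega$ with $|\nu|(\beta\Omega\setminus K)\leq\varepsilon$.

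To finish, I would take $\varepsilon=1/n$ with corresponding compacta $K_n\subset\Omega$ and set $K_\infty=\bigcup_n K_n$, a $\sigma$-compact subset of $\Omega$; then $|\nu|(\beta\Omega\setminus K_\infty)\leq|\nu|(\beta\Omega\setminus K_n)\leq 1/n$ for all $n$, so $|\nu|(\beta\Omega\setminus K_\infty)=0$, and in particular $|\nu|(\beta\Omega\setminus\Omega)=0$. Defining $\mu:=\nu|_\Omega\in\ca(\Omega)$ (finite, since $|\nu|$ is finite and carried by $K_\infty\subset\Omega$) and using $\tilde g=g$ on $\Omega$ gives $F(g)=\int_{\beta\Omega}\tilde g\,d\nu=\int_\Omega g\,d\mu$, as required. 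The main obstacle is the norm identity $\sup\{|\nu(\phi)|:\dots\}=|\nu|(\beta\Omega\setminus K)$ and, upstream of it, checking that the isometry $C_b(\Omega)\cong C(\beta\Omega)$ carries the constraint ``vanishing on $K$'' faithfully; everything else is bookkeeping with tightness and inner regularity.
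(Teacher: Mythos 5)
Your proof is correct, but there is nothing in the paper to compare it against: the paper does not prove this lemma, it simply quotes it from \citep{MR2098271} (Prop.~5, p.~IX.59) and \citep{MR2267655} (Th.~7.10.6). What you have written is, in effect, the standard proof of the cited result, specialised to Polish $\Omega$, and it is sound at every step. Necessity is exactly tightness of $|\mu|$ (every finite Borel measure on a Polish space is Radon). For sufficiency, your two key points both check out: since $K\subset\Omega$, the isometry $C_b(\Omega)\cong C(\beta\Omega)$ recalled by the paper just before the lemma does match the subspace $\{g\in C_b(\Omega):g|_K=0\}$ with $\{\phi\in C(\beta\Omega):\phi|_K=0\}$; and the norm identity $\sup\{|\nu(\phi)|:\phi\in\BB_{C(\beta\Omega)},\ \phi|_K=0\}=|\nu|(\beta\Omega\setminus K)$ is the classical consequence of inner regularity of the Radon measure $|\nu|$ on the open set $\beta\Omega\setminus K$ together with Urysohn's lemma (take compacta $C_+\subset P$, $C_-\subset N$ inside a Hahn decomposition of $\beta\Omega\setminus K$ and a function equal to $\pm1$ there and $0$ on $K$). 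The concluding bookkeeping is also fine: the $\sigma$-compact set $K_\infty=\bigcup_n K_n\subset\Omega$ carries $|\nu|$, so the restriction $\mu:=\nu|_\Omega$ is a well-defined finite Borel measure on $\Omega$ (well-definedness on the trace $\sigma$-field is harmless precisely because $|\nu|(\beta\Omega\setminus K_\infty)=0$), and $F(g)=\int_{\beta\Omega}\tilde g\,d\nu=\int_\Omega g\,d\mu$. Compared with the paper's citation, your argument makes the lemma self-contained at the cost of invoking Riesz representation on $\beta\Omega$, regularity of Radon measures, and Urysohn; it is also worth noting that your sufficiency argument never uses Polishness (it works for Radon measures on any completely regular space, which is the generality of Bogachev's theorem), while Polishness enters only in the necessity direction, where it guarantees that $\mu$ is automatically tight.
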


The following version of Prokhorov's theorem characterises the
$\sigma(\ca(\Omega),C_b(\Omega))$-compact sets; here the sufficiency
follows from Lemma~\ref{lem:DualOfCb} and
$\sigma(\ca(\Omega),C_b(\Omega))=\sigma(C_b(\Omega)',C_b(\Omega))|_{\ca(\Omega)}$
(so $\sigma(\ca(\Omega),C_b(\Omega))$-bounded $\Leftrightarrow$ weak*
bounded in $C_b(\Omega)'$ $\Leftrightarrow$ norm bounded), while the
necessity is by ``gliding hump''.
\begin{lemma}[Prokhorov's theorem; \citep{MR2267655}, Th.~8.6.7 and
  8.6.8]
  \label{lem:Prokhorov}\mbox{}
  (If $\Omega$ is Polish,) a set $\Lambda\subset\ca(\Omega)$ is
  relatively $\sigma(\ca(\Omega),C_b(\Omega))$-compact iff it is
  bounded in (total variation) norm and uniformly tight, i.e. for any
  $\varepsilon>0$, there exists a compact set $K\subset\Omega$ such
  that $\sup_{\nu\in\Lambda}|\nu|(\Omega\setminus K)<\varepsilon$.
\end{lemma}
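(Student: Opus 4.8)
The plan is to run both implications inside the ambient weak$^{*}$ structure of $C_b(\Omega)'$, using that the embedding $\ca(\Omega)\hookrightarrow C_b(\Omega)'$ is isometric and that $\sigma(\ca(\Omega),C_b(\Omega))$ is exactly the restriction of the weak$^{*}$ topology $\sigma(C_b(\Omega)',C_b(\Omega))$ to $\ca(\Omega)$, as noted above. For the \emph{sufficiency}, suppose $\Lambda$ is norm-bounded and uniformly tight. Norm-boundedness confines $\Lambda$ to a multiple of the unit ball of $C_b(\Omega)'$, which is weak$^{*}$-compact by Banach--Alaoglu, so the weak$^{*}$ closure $\overline{\Lambda}$ of $\Lambda$ in $C_b(\Omega)'$ is weak$^{*}$-compact. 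Since the two topologies agree on $\ca(\Omega)$, it then suffices to show $\overline{\Lambda}\subseteq\ca(\Omega)$, for then $\overline{\Lambda}$ is a $\sigma(\ca(\Omega),C_b(\Omega))$-compact set containing $\Lambda$. This inclusion is precisely where Lemma~\ref{lem:DualOfCb} enters: given $F\in\overline{\Lambda}$ and $\varepsilon>0$, I would use tightness to pick a compact $K$ with $\sup_{\nu\in\Lambda}|\nu|(\Omega\setminus K)<\varepsilon$; then any $g\in\BB_{C_b(\Omega)}$ vanishing on $K$ satisfies $|\nu(g)|\le|\nu|(\Omega\setminus K)<\varepsilon$ for every $\nu\in\Lambda$, and passing to the weak$^{*}$ limit gives $|F(g)|\le\varepsilon$, whence $F\in\ca(\Omega)$ by Lemma~\ref{lem:DualOfCb}.

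For the \emph{necessity}, norm-boundedness is the easy half: a relatively $\sigma(\ca(\Omega),C_b(\Omega))$-compact set is bounded for that topology, i.e.\ weak$^{*}$-bounded in $C_b(\Omega)'$, whence norm-bounded by the uniform boundedness principle (the isometry identifies the dual norm with total variation). The substance is uniform tightness, which I would prove by contradiction through a \emph{gliding hump}. First I would reduce the failure of tightness to a single scale: fixing a countable dense set $\{x_j\}$ and using that in a Polish space a closed set is compact iff totally bounded, uniform tightness is equivalent to $\lim_m\sup_{\nu\in\Lambda}|\nu|\bigl(\Omega\setminus U^{(k)}_m\bigr)=0$ for every $k$, where $U^{(k)}_m:=\bigcup_{j\le m}B(x_j,1/k)$. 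Hence if $\Lambda$ is not tight there exist $\varepsilon_0>0$ and a scale $k_0$ such that the increasing open sets $U_m:=U^{(k_0)}_m\uparrow\Omega$ satisfy $\sup_{\nu\in\Lambda}|\nu|(\Omega\setminus U_m)\ge\varepsilon_0$ for all $m$.

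The hard part is to contradict relative compactness, and this is the Vitali--Hahn--Saks/Grothendieck phenomenon underlying the hump: for a relatively $\sigma(\ca(\Omega),C_b(\Omega))$-compact family the countable additivity is \emph{uniform} along any increasing open exhaustion, so $\sup_{\nu\in\Lambda}|\nu|(\Omega\setminus U_m)\to0$, contradicting the previous line. To prove this uniformity I would argue by contraposition: from its failure extract indices $m_1<m_2<\cdots$ and measures $\nu_k\in\Lambda$ carrying at least $\varepsilon_0/2$ of their total variation on the mutually separated ``annuli'' $U_{m_{k+1}}\setminus\overline{U_{m_k}}$, together with bumps $g_k\in\BB_{C_b(\Omega)}$ supported there; the separation of supports makes every signed combination $\sum_k\pm g_k$ again a member of $\BB_{C_b(\Omega)}$, and testing the $\nu_k$ against well-chosen combinations yields a sequence in $\Lambda$ with no $\sigma(\ca(\Omega),C_b(\Omega))$-cluster point, which is impossible since in a compact space every sequence clusters. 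I expect the bookkeeping in this extraction --- aligning the escaping mass with the supports of the bumps and choosing signs so that the combinations genuinely separate the $\nu_k$ --- to be the only delicate point, the remaining reductions being soft functional analysis.
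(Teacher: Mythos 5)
Your proposal is correct and follows essentially the same route the paper itself indicates for this lemma (which it otherwise quotes from Bogachev, Th.~8.6.7 and 8.6.8): sufficiency from Lemma~\ref{lem:DualOfCb}, Banach--Alaoglu and the identity $\sigma(\ca(\Omega),C_b(\Omega))=\sigma(C_b(\Omega)',C_b(\Omega))|_{\ca(\Omega)}$, and necessity from weak*-boundedness (hence norm-boundedness) plus a gliding hump. One point of care in your hump step: continuity of $\sum_k\pm g_k$ does not follow from separation of the supports alone (unit-height bumps with disjoint supports can accumulate at a point), but rather from the fact that your annuli escape along the open exhaustion $U_m\uparrow\Omega$, so that every point has a neighborhood $U_m$ meeting only finitely many of them.
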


\begin{remark}
  \label{rem:Prokhorov}
  The restriction of $\sigma(\ca(\Omega),C_b(\Omega))$ to the set of
  probability measures is precisely what probabilists call the weak
  topology, and Prokhorov's theorem is usually stated for probability
  measures, but we will need it on the whole $\ca(\Omega)$.
\end{remark}

\runinhead{Generalities on convex functions.} Given a duality
$\langle X,X'\rangle$, a convex function
$F:X\rightarrow\RB\cup\{+\infty\}$ is called proper if
$\dom(F):=\{x\in X:F(x)<\infty\}\neq \emptyset$. By the Hahn-Banach
theorem, such an $F$ is lower semicontinuous (lsc) for $\sigma(X,X')$,
equivalently for $\tau(X,X')$, iff $F$ has the Fenchel-Moreau dual
representation by $X'$:
\begin{align*}
  F(x)=\sup_{x'\in X'}\left(\langle x,x'\rangle-F^*(x')\right),\quad x\in X,
\end{align*}
where $F^*(x'):=\sup_{x\in X}(\langle x,x'\rangle-F(x))$, $x'\in X'$,
the conjugate of $F$. Also,
\begin{lemma}[Moreau-Rockafellar theorem; e.g. \citep{MR0160093}]
  \label{lem:MoreauRockafellar}
  Let $\langle X,X'\rangle$ be a dual pair. A finite lsc convex
  function $F:X\rightarrow\RB$ is $\tau(X,X')$-continuous at $0$ (then
  on the whole $X$) iff $F^*$ has $\sigma(X',X)$-compact sublevel
  sets, i.e. $\{x'\in X': F^*(x')\leq c\}$, $c\in\RB$, are
  $\sigma(X',X)$-compact. In this case,
  $F(x)=\max_{x'\in X'}\left(\langle x,x'\rangle-F^*(x')\right)$,
  $\forall x\in X$.
\end{lemma}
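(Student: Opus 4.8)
The plan is to prove the two implications separately, using throughout the Fenchel--Moreau representation $F=F^{**}$ (valid since $F$, being finite, convex and lsc, is proper), the fact that $F^*$ is $\sigma(X',X)$-lsc (as a supremum of the $\sigma(X',X)$-continuous affine maps $x'\mapsto\langle x,x'\rangle-F(x)$), so that all its sublevel sets are automatically $\sigma(X',X)$-closed, and the description of $\tau(X,X')$ as the topology of uniform convergence on $\sigma(X',X)$-compact absolutely convex sets, whose polars $K^\circ$ form a neighbourhood base at $0$. I would first treat ``continuity $\Rightarrow$ compact sublevel sets''. Continuity at $0$ together with finiteness yields a $\sigma(X',X)$-compact absolutely convex $K\subset X'$ and a constant $M$ with $F\leq M$ on $K^\circ$, so that for every $x'$, $F^*(x')\geq\sup_{x\in K^\circ}(\langle x,x'\rangle-F(x))\geq\mu_K(x')-M$, where $\mu_K$ is the gauge of $K$, i.e.\ the support function of $K^\circ$ (using the bipolar identity $K^{\circ\circ}=K$, valid since $K$ is $\sigma(X',X)$-closed absolutely convex). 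Hence $\{F^*\leq c\}\subset(c+M)K$, a $\sigma(X',X)$-compact set; being also $\sigma(X',X)$-closed, each sublevel set is compact.

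The converse is the substantial direction, and I would begin with attainment of the supremum in $F(x)=\sup_{x'}(\langle x,x'\rangle-F^*(x'))$. Fixing $x$ and $\alpha<F(x)$, consider the nonempty $\sigma(X',X)$-closed superlevel set $A_\alpha=\{x':\langle x,x'\rangle-F^*(x')\geq\alpha\}$. For $x'\in A_\alpha$ the definition gives $F^*(x')\leq\langle x,x'\rangle-\alpha$, while finiteness of $F(2x)\geq 2\langle x,x'\rangle-F^*(x')$ yields $F^*(x')\geq 2\langle x,x'\rangle-F(2x)$; combining these gives $\langle x,x'\rangle\leq F(2x)-\alpha$ and then $F^*(x')\leq F(2x)-2\alpha=:c_0$. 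Thus $A_\alpha$ is a $\sigma(X',X)$-closed subset of the (compact) sublevel set $\{F^*\leq c_0\}$, hence compact, and the $\sigma(X',X)$-usc map $x'\mapsto\langle x,x'\rangle-F^*(x')$ attains its maximum, which equals $F(x)$. This is exactly where the standing hypothesis $\dom F=X$ enters: finiteness of $F$ in the direction $2x$ forces the ``super-coercivity'' of $F^*$ that upgrades inf-compactness into compactness of the relevant superlevel sets.

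It remains to deduce $\tau(X,X')$-continuity, for which it suffices to bound $F$ above on a single $\tau$-neighbourhood of $0$, since a finite convex function bounded above near a point is continuous there, and continuity propagates to all of $X$ because $\dom F=X$. Symmetrising a sublevel set $L=\{F^*\leq c\}$ (nonempty for $c>-F(0)$), the set $K:=\operatorname{co}(L\cup(-L))$ is absolutely convex and $\sigma(X',X)$-compact, being a continuous image of $[0,1]\times L\times(-L)$; hence $K^\circ$ is a $\tau$-neighbourhood of $0$, and $\sup_{x\in K^\circ}F(x)=\sup_{x'}(\mu_K(x')-F^*(x'))$. The hard part will be to show that this last supremum is finite, i.e.\ that the gauge $\mu_K$ is dominated by $F^*$ up to an additive constant. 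I expect this to be the crux: the per-direction estimate $\langle x,x'\rangle/F^*(x')\to0$ as $F^*(x')\to\infty$ (read off from the finiteness of $F(\lambda x)$ for all $\lambda>0$) must be made \emph{uniform} over $x\in K^\circ$, and it is precisely here that compactness of \emph{all} sublevel sets of $F^*$, rather than of a single one, is needed. Once finiteness of the supremum is secured, $F\leq M$ on $K^\circ$ gives continuity, and together with the attainment established above we conclude $F(x)=\max_{x'\in X'}(\langle x,x'\rangle-F^*(x'))$ for every $x$.
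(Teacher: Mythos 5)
Your first implication (Mackey continuity $\Rightarrow$ compact sublevel sets) and your attainment argument are both correct; the attainment argument is in the same spirit as the only thing the paper itself proves (the paper cites Moreau for the equivalence and merely sketches the final ``max'' claim via upper semicontinuity of $\langle x,\cdot\rangle-F^*$ on compact superlevel sets, which is what you do, with the compactness extracted from finiteness of $F(2x)$). The problem is the converse implication, which is the substantive half of the lemma: you correctly reduce it to showing $\sup_{x'\in X'}\bigl(\mu_K(x')-F^*(x')\bigr)<\infty$ for $K=\operatorname{co}(L\cup(-L))$, $L=\{F^*\le c\}$, but you then only \emph{announce} this as ``the crux'' and never prove it. As written, the proposal contains no proof of ``compact sublevels $\Rightarrow$ $\tau(X,X')$-continuity''; everything preceding that point (compactness and absolute convexity of $K$, the identity $\sup_{K^\circ}F=\sup_{x'}(\mu_K-F^*)$) is preparation, not the argument. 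This is a genuine gap.

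Moreover, your diagnosis of what should close the gap --- that compactness of \emph{all} sublevel sets, rather than of a single one, is what makes the per-direction estimates uniform --- points in the wrong direction: the missing inequality follows from convexity of $F^*$ alone, given one nonempty compact sublevel set. Concretely, normalise $F(0)=0$, so $\inf F^*=-F^{**}(0)=0$; take $L=\{F^*\le 3\}$, $K=\operatorname{co}(L\cup(-L))$, and pick $a_0$ with $F^*(a_0)\le 1$. If $s:=F^*(x')>3$, put $t=2/s\in(0,1)$; convexity gives $F^*\bigl(a_0+t(x'-a_0)\bigr)\le(1-t)\cdot 1+ts\le 3$, so $a_0+t(x'-a_0)\in L$, whence $x'\in a_0+\tfrac{s}{2}(L-a_0)\subset K+\tfrac{s}{2}\cdot 2K=(1+s)K$ (using $a_0\in L\subset K$, $L-a_0\subset L+(-L)\subset 2K$, and $K+sK=(1+s)K$ for convex $K$). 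Thus $\mu_K(x')\le 1+F^*(x')$ when $F^*(x')>3$, and trivially also when $F^*(x')\le 3$ since then $x'\in L\subset K$. Hence $\sup_{x'}(\mu_K-F^*)\le 1$, so $F\le 1$ on the Mackey neighbourhood $K^\circ$ of $0$, and a finite convex function bounded above on a neighbourhood of $0$ is $\tau(X,X')$-continuous there, hence on all of $X$. With this paragraph inserted your proof is complete; without it, the hard direction of the lemma remains unproven.
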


For the last part, note that $x'\mapsto F^*(x')-\langle x,x'\rangle$
is the conjugate of $y\mapsto F(x+y)$ which is $\tau(X,X')$-continuous
if $F$ is, so $\{x'\in X':\langle x,x'\rangle-F^*(x')\geq c\}$,
$c\in\RB$, are $\sigma(X',X)$-compact while
$\langle x,\cdot\rangle- F^*$ is upper semicontinuous for the same
topology.

\subsection{The functionals}
\label{sec:Ingredients}

Let $\PC$ be a set of Borel probability measures on $\Omega$ that we
suppose
\begin{equation}
  \label{eq:Tight}
  \PC\text{ is convex and $\sigma(\ca(\Omega),C_b(\Omega))$-compact,}
\end{equation}
but we \emph{\bfseries do not} suppose that it is dominated, i.e. that
there is a single probability measure $\PB$ such that $P\ll \PB$ for
all $P\in\PC$, so it does not embed in an $L_1$-space, and the common
uniform integrability arguments do not work.

Let $\LB_0(\PC)$ be the vector space of (equivalence classes modulo
equality ``$P$-a.s. for all $P\in\PC$'' of) real valued Borel
functions on $\Omega$, and define
\begin{align*}
  \LB_1(\PC)&:=\left\{\xi\in\LB_0(\PC):\|\xi\|_{1,\PC}:=\sup_{P\in\PC}\EB_P[|\xi|]<\infty\right\},\\
  \LB_{1,b}(\PC)&:=\left\{\xi\in \LB_1(\PC):\lim_n\|\xi\ind_{\{|\xi|>n\}}\|_{1,\PC}=0\right\}.
\end{align*}
It is known (see \citep{MR2754968}), and easily verified, that
$\LB_1(\PC)$ is a Banach space and $\LB_{1,b}(\PC)$ is its closed
subspace. In this paper, we just use these spaces to simplify the
notation. Also, as usual, we do not differentiate a class
$\xi\in \LB_0(\PC)$ and its representatives $f\in\xi$; so we regard
$C_b(\Omega)$ as a subspace of $\LB_0(\PC)$ consisting of those $\xi$
admitting a bounded continuous representative.
\begin{lemma}[\citep{MR2754968}, Prop.~19]
  \label{lem:UIL1B}
  If $\eta\in\LB_{1,b}(\PC)$, then any $\varepsilon>0$ admits a
  $\delta>0$ such that for any $A\in\BC(\Omega)$ with
  $\sup_{P\in\PC}(A)\leq \delta$, one has
  $\sup_{P\in\PC}\EB_P[|\eta|\ind_A]\leq\varepsilon$.  In particular
  (under (\ref{eq:Tight})), if $\eta\in\LB_{1,b}(\PC)$, we have
  \begin{align*}
    \forall\varepsilon>0,\,
    \exists\text{a compact }K\subset\Omega\text{ such that }    \sup_{P\in\PC}\EB_P[|\eta|\ind_{K^c}]\leq 
    \varepsilon.
  \end{align*}
\end{lemma}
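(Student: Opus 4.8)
The plan is to prove the two assertions in turn, the first by a uniform truncation argument and the second by combining it with Prokhorov's theorem. For the first assertion, fix $\varepsilon>0$. The defining property of $\LB_{1,b}(\PC)$, namely $\lim_n\|\eta\ind_{\{|\eta|>n\}}\|_{1,\PC}=0$, furnishes an integer $n$ (crucially independent of $P$) with $\sup_{P\in\PC}\EB_P[|\eta|\ind_{\{|\eta|>n\}}]\leq\varepsilon/2$. First I would split, for arbitrary $A\in\BC(\Omega)$ and $P\in\PC$,
\begin{equation*}
  \EB_P[|\eta|\ind_A]
  =\EB_P\bigl[|\eta|\ind_A\ind_{\{|\eta|\leq n\}}\bigr]
   +\EB_P\bigl[|\eta|\ind_A\ind_{\{|\eta|>n\}}\bigr]
  \leq nP(A)+\tfrac{\varepsilon}{2},
\end{equation*}
where the last inequality uses $|\eta|\ind_{\{|\eta|\leq n\}}\leq n$ on the first term and the choice of $n$ on the second. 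Taking the supremum over $P\in\PC$ gives $\sup_{P\in\PC}\EB_P[|\eta|\ind_A]\leq n\sup_{P\in\PC}P(A)+\varepsilon/2$. Setting $\delta:=\varepsilon/(2n)$ then yields the claim: whenever $\sup_{P\in\PC}P(A)\leq\delta$ we obtain $\sup_{P\in\PC}\EB_P[|\eta|\ind_A]\leq\varepsilon$.

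For the second (``in particular'') assertion, I would invoke the tightness of $\PC$. Under assumption \eqref{eq:Tight}, $\PC$ is $\sigma(\ca(\Omega),C_b(\Omega))$-compact, hence in particular relatively compact, so by the necessity direction of Prokhorov's theorem (Lemma~\ref{lem:Prokhorov}) it is uniformly tight. Applying this with the $\delta>0$ produced in the first part, there is a compact $K\subset\Omega$ with $\sup_{P\in\PC}P(\Omega\setminus K)\leq\delta$ (recall $|P|=P$ for probability measures). Taking $A=K^c$ in the first part then gives $\sup_{P\in\PC}\EB_P[|\eta|\ind_{K^c}]\leq\varepsilon$, as required.

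There is no serious obstacle here: the only point requiring care is that the truncation level $n$ in the first step must be chosen uniformly in $P$, and this is exactly what the definition of $\LB_{1,b}(\PC)$ supplies, in contrast to the dominated setting where one would instead use uniform integrability with respect to a single reference measure. The passage from uniform absolute continuity (first assertion) to the compact-set bound (second assertion) is then a routine consequence of Prokhorov's theorem, which is available to us as Lemma~\ref{lem:Prokhorov} on all of $\ca(\Omega)$ and hence on $\PC\subset\mathrm{Prob}(\Omega)$.
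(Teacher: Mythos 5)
Your proof is correct. Note that the paper itself contains no proof of this lemma: the first assertion is quoted directly from the cited reference (\citep{MR2754968}, Prop.~19), and the ``in particular'' part is left as an immediate consequence of (\ref{eq:Tight}). Your argument supplies exactly the intended details: the truncation level $n$ chosen uniformly over $\PC$ --- which is precisely what membership in $\LB_{1,b}(\PC)$ provides, as you rightly emphasise --- gives the uniform absolute continuity statement, and the necessity half of Prokhorov's theorem (Lemma~\ref{lem:Prokhorov}) converts $\sigma(\ca(\Omega),C_b(\Omega))$-compactness of $\PC$ into uniform tightness, so that taking $A=K^c$ closes the argument. You also correctly read the obvious typo in the statement, $\sup_{P\in\PC}(A)\leq\delta$, as $\sup_{P\in\PC}P(A)\leq\delta$. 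No gaps.
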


The next ingredient is a random function
$\varphi:\Omega\times\RB\rightarrow\RB$ which we suppose:
\begin{assumption}
  \label{as:Integrand}
  $\varphi:\Omega\times\RB\rightarrow\RB$ is such that
  \begin{align}
    \label{eq:AssInteg1}
    &\forall \omega\in\Omega,\, x\mapsto \varphi(\omega,x)\text{ is an (everywhere finite) convex function};\\
    \label{eq:AssInteg2}
    &\forall x\in\RB,\, \omega\mapsto \varphi(\omega,x)\text{ is upper semicontinuous (usc)};\\
    \label{eq:AssInteg3}
    &\forall x\in\RB,\, \varphi(\cdot, x)^+\in\LB_{1,b}(\PC)\text{ i.e. }\lim_n\sup_{P\in\PC}\EB_P[\varphi(\cdot,x)\ind_{\{\varphi(\cdot, x)\geq n\}}]=0.\\
    \label{eq:AssInteg4}
&    \varphi(\cdot, 0)^-\in \LB_1(\PC).
  \end{align}
\end{assumption}

Several remarks on assumptions are in order. First, by
(\ref{eq:AssInteg1}), $x\mapsto \varphi(\omega,x)$,
$\omega\in \Omega$, are continuous (being a finite valued convex
function on $\RB$), while by (\ref{eq:AssInteg2}),
$\omega\mapsto \varphi(\omega,x)$, $x\in\RB$, are Borel; hence in the
terminology of convex analysis (see e.g. \citep{MR1491362}), $\varphi$
is a \emph{\bfseries convex Carathéodory integrand}, a fortiori it is
a (finite-valued) \emph{\bfseries normal convex integrand}, i.e.  the
\emph{epigraphical mapping}
\begin{align*}
  \omega\mapsto \{(x,\alpha)\in\RB\times\RB: \varphi(\omega,x)\leq \alpha\}
\end{align*}
is a closed-valued measurable multifunction. Then the (partial) conjugate
\begin{align*}
  \varphi^*(\omega,y):=\sup_{x\in\RB}(xy-f(\omega,x)),\quad\forall y\in\RB,
\end{align*}
is also a proper normal convex integrand, and Young's inequality
holds:
\begin{equation}
  \label{eq:Young1}
  xy\leq \varphi(\omega,x)+\varphi^*(\omega,y),\,\forall \omega\in\Omega,\,x,y\in\RB. 
\end{equation}
The normality of $\varphi$ implies that it is jointly measurable, so
$\omega\mapsto \varphi(\omega,f(\omega))$ is measurable whenever
$f:\Omega\rightarrow\RB$ is.  Then (\ref{eq:AssInteg2}) implies (hence
is equivalent to):
\begin{equation}
  \label{eq:AssInteg2Better}
  \tag{\text{\ref{eq:AssInteg2}${}^\prime$}}
  \forall  f\in C_b(\Omega),\, \omega\mapsto \varphi(\omega,f(\omega))\text{ is usc}.
\end{equation}
For if $f\in C_b(\Omega)$ and $\omega_n\rightarrow \omega$ in
$\Omega$, then $K=\{\omega, \omega_n;n\geq 1\}$ is compact, so by the
usc, $(\varphi(\omega',\cdot))_{\omega'\in K}$ is a pointwise bounded
family of convex functions; hence \citep[Th.~10.6]{MR0274683} gives us
a constant $c>0$ such that
$\sup_{\omega\in K}|\varphi(\omega,x)-\varphi(\omega,y)|\leq c|x-y|$
whenever $x,y\in [-\|f\|_\infty,\|f\|_\infty]$. Consequently,
\begin{align*}
  \limsup_n&\{\varphi(\omega_n,f(\omega_n))-\varphi(\omega,f(\omega))\}\\
           &\leq c\limsup_n|f(\omega_n)-f(\omega)|
    +\limsup_n\varphi(\omega_n,f(\omega))-\varphi(\omega,f(\omega))\leq 0.
\end{align*}
Similarly, (\ref{eq:AssInteg3}) already implies
\begin{equation}
  \label{eq:AssInteg3Better}
  \tag{\text{\ref{eq:AssInteg3}${}^\prime$}}
  \forall f\in C_b(\Omega), \, \varphi(\cdot, f)^+\in \LB_{1,b}(\PC).
\end{equation}
Indeed, by convexity,
$\varphi(\cdot,f)\leq
\varphi(\cdot,-\|f\|_\infty)^++\varphi(\cdot,\|f\|_\infty)^+\in
\LB_{1,b}(\PC)$.

Finally, given (\ref{eq:AssInteg1}) and (\ref{eq:AssInteg3}),
(\ref{eq:AssInteg4}) is equivalent to:
\begin{equation}
  \label{eq:IntegrandInteg2}
  \exists \eta\in\LB_1(\PC)\text{ such that }\varphi^*(\cdot, \eta)^+\in\LB_1(\PC).
\end{equation}
Indeed, by the normality of $\varphi^*$ and
$\varphi(\cdot, 0)=\sup_y( -\varphi^*(\cdot, y))$,
$\Xi(\omega):=\{y\in\RB: \varphi(\omega, 0)\leq
1-\varphi^*(\omega,y)\}$ is a nonempty closed valued measurable
multifunction.  Thus Kuratowski-Ryll-Nardzewski's measurable selection
theorem yields a measurable function $f:\Omega\rightarrow\RB$ such
that $f(\omega)\in \Xi(\omega)$, i.e.
$\varphi(\omega,0)\leq 1-\varphi^*(\omega,f(\omega))$. Thus
$\varphi^*(\cdot, f)\leq1-\varphi(\cdot,0)^-\in \LB_1(\PC)$, and
$|f|=f\mathrm{sgn}(f)\leq \varphi(\cdot, -1)^++\varphi(\cdot,
1)^++\varphi^*(\cdot,f)^+\in \LB_1(\PC)$.

Now we define
\begin{equation}
  \label{eq:IntegFunctDef}
  I_{\varphi,\PC}(f):=\sup_{P\in\PC}\EB_P[\varphi(\cdot,f)]
  =\sup_{P\in\PC}\int_\Omega\varphi(\omega,f(\omega))P(d\omega),\quad f\in C_b(\Omega).
\end{equation}
We check that this is well-defined under (\ref{eq:Tight}) and
Assumption~\ref{as:Integrand}.
\begin{lemma}
  \label{lem:IntegFuncWellDefined}
  Under (\ref{eq:Tight}) and Assumption~\ref{as:Integrand},
  $I_{\varphi,\PC}$ is well-defined as a finite-valued convex function
  on $C_b(\Omega)$, and it is lower semicontinuous for the topology of
  pointwise convergence on bounded sets:
  \begin{equation}
    \label{eq:IntegFuncQSLSC}
    \sup_n\|f_n\|_{\infty}<\infty,\,f_n\rightarrow f\text{ pointwise }\Rightarrow\,
    I_{\varphi,\PC}(f)\leq\liminf_nI_{\varphi,\PC}(f_n).
  \end{equation}
  In particular, $I_{\varphi,\PC}$ is norm-lsc, hence norm continuous
  (being finite-valued) on $C_b(\Omega)$.
\end{lemma}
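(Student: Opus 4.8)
The plan is to dispatch the three assertions in the statement — finiteness (well-definedness), convexity, and the lower semicontinuity property~(\ref{eq:IntegFuncQSLSC}) — separately, the last being the only one that requires genuine work, and then to read off norm continuity as a formal consequence.

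First I would fix $f\in C_b(\Omega)$ and bound the two tails of $\varphi(\cdot,f)$ independently. The positive part is handled by~(\ref{eq:AssInteg3Better}): since $\varphi(\cdot,f)^+\in\LB_{1,b}(\PC)\subset\LB_1(\PC)$, we have $\sup_{P\in\PC}\EB_P[\varphi(\cdot,f)^+]=\|\varphi(\cdot,f)^+\|_{1,\PC}<\infty$. For the negative part I would invoke Young's inequality~(\ref{eq:Young1}) with the measurable selector $\eta\in\LB_1(\PC)$ furnished by~(\ref{eq:IntegrandInteg2}), which gives the pointwise minorant $\varphi(\omega,f(\omega))\geq f(\omega)\eta(\omega)-\varphi^*(\omega,\eta(\omega))$ and hence $\varphi(\cdot,f)^-\leq\|f\|_\infty|\eta|+\varphi^*(\cdot,\eta)^+\in\LB_1(\PC)$. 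Thus $\EB_P[\varphi(\cdot,f)]$ is a well-defined real number for each $P$, bounded above uniformly in $P\in\PC$, so $I_{\varphi,\PC}(f)$ is finite. Convexity is then immediate, as each $f\mapsto\EB_P[\varphi(\cdot,f)]$ is convex by~(\ref{eq:AssInteg1}) and $I_{\varphi,\PC}$ is their pointwise supremum.

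For~(\ref{eq:IntegFuncQSLSC}), suppose $M:=\sup_n\|f_n\|_\infty<\infty$ and $f_n\to f$ pointwise, so that $\|f\|_\infty\leq M$ as well. The crucial observation is that in the lower-semicontinuity direction I may work with one $P\in\PC$ at a time, so that no domination hypothesis on $\PC$ is needed. Fixing such a $P$, continuity of $x\mapsto\varphi(\omega,x)$ gives $\varphi(\omega,f_n(\omega))\to\varphi(\omega,f(\omega))$ for every $\omega$; and since all the $f_n$ take values in $[-M,M]$, convexity yields the $n$-independent upper bound $\varphi(\cdot,f_n)\leq\varphi(\cdot,-M)^++\varphi(\cdot,M)^+$ while the Young minorant above (with the same $\eta$) gives the $n$-independent lower bound $\varphi(\cdot,f_n)\geq-M|\eta|-\varphi^*(\cdot,\eta)^+$. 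Both dominating functions lie in $\LB_1(\PC)\subset L_1(P)$, so dominated convergence gives $\EB_P[\varphi(\cdot,f_n)]\to\EB_P[\varphi(\cdot,f)]$. Because $\EB_P[\varphi(\cdot,f_n)]\leq I_{\varphi,\PC}(f_n)$ for every $n$, this yields $\EB_P[\varphi(\cdot,f)]\leq\liminf_n I_{\varphi,\PC}(f_n)$, and taking the supremum over $P$ (the right-hand side being independent of $P$) establishes~(\ref{eq:IntegFuncQSLSC}). Norm-lsc is then the special case in which $f_n\to f$ uniformly, and a finite-valued norm-lsc convex function on the Banach space $C_b(\Omega)$ is automatically norm-continuous: its closed convex sublevel sets cover $C_b(\Omega)$, so by the Baire category theorem one of them has nonempty interior, whence $I_{\varphi,\PC}$ is bounded above on a ball and a convex function bounded above on an open set is continuous.

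The hard part, or at least the step demanding the most care, will be producing the two-sided $n$-independent $L_1(P)$ domination needed for dominated convergence: the uniform upper bound on the bounded range $[-M,M]$ comes cheaply from convexity and~(\ref{eq:AssInteg3}), but the lower bound is exactly where the selector $\eta$ of~(\ref{eq:IntegrandInteg2}) and Young's inequality are indispensable. Once this domination is secured, the remainder is a routine application of dominated convergence together with standard convex-analytic facts, and it is worth stressing that the absence of a dominating measure for $\PC$ causes no difficulty here precisely because the limit is taken measure by measure before the supremum is formed.
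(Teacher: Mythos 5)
Your proposal is correct and takes essentially the same route as the paper: the same Young-inequality minorant $-\|f\|_\infty|\eta|-\varphi^*(\cdot,\eta)^+$ built from the selector $\eta$ of (\ref{eq:IntegrandInteg2}), finiteness of the positive part from (\ref{eq:AssInteg3Better}), and the same structure of passing to the limit for each fixed $P\in\PC$ before taking the supremum. The only (immaterial) difference is that you invoke dominated convergence, using both the upper and lower integrable bounds, where the paper applies Fatou's lemma, which needs only the minorant; your Baire-category justification of norm continuity is likewise just an expansion of the standard fact the paper cites implicitly.
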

\begin{proof}
  Picking an $\eta\in\LB_1(\PC)$ as in (\ref{eq:IntegrandInteg2}),
\begin{align*}
  \forall f\in C_b(\Omega),\,
  \varphi(\cdot,f)\geq -\|f\|_{\infty}|\eta|-\varphi^*(\cdot,\eta)^+\in\LB_1\subset\mcap_{P\in\PC}L_1(P).
\end{align*}
Thus for each $P\in\PC$, $f\mapsto \EB_P[\varphi(\cdot,f)]$ is
well-defined as a convex function on $C_b(\Omega)$, and it is lsc on
bounded sets for the pointwise convergence by Fatou's lemma, hence so
is their pointwise supremum
$I_{\varphi,\PC}(f)=\sup_{P\in\PC}\EB_P[\varphi(\cdot,f)]$.  Then
(\ref{eq:AssInteg3Better}) guarantees that
$I_{\varphi,\PC}(f)<\infty$ for all $f\in C_b(\Omega)$.
\end{proof}

We next define the $\varphi^*$-divergence functional by
\begin{align}
  \label{eq:Entropy1}
  J_{\varphi^*}(\nu|P)&:=\ccases{
                        \EB_P[\varphi^*(\cdot, d\nu/dP)]&\text{if }\nu\ll P,\\
  +\infty&\text{otherwise},}\quad\forall \nu\in \ca(\Omega),\, P\in\PC.
\end{align}
This is a jointly convex function with values in $(-\infty,\infty]$.
To see this, let
\begin{align*}
  \tilde\varphi^*(\cdot, y,z):=\sup_x(xy-z\varphi(\cdot, x))
  =z\varphi^*\left(\cdot,\frac{y}{z}\right)\ind_{\{z>0\}}
  +\infty\ind_{\{y\neq 0, z=0\}}\geq - z\varphi(\cdot,0)^+.
\end{align*}
This $\tilde\varphi^*$ is convex in $(y,z)\in \RB\times\RB^+$ and
$J_V(\nu|P)=\EB_\PB[\tilde\varphi^*(\cdot,
\frac{d\nu}{d\PB},\frac{dP}{d\PB})]>-\infty$ (by (\ref{eq:AssInteg3}))
whenever $\nu,P\ll\PB\in\mathrm{Prob}(\Omega)$. Since any finite number
of finite signed measures are dominated by a single probability, $J_V$
is jointly convex on $\ca(\Omega)\times\PC$, and $J_V(\nu|P)<\infty$
if $d\nu/dP=\eta\in \LB_1(\PC)$ as in (\ref{eq:IntegrandInteg2}).
Since $\PC$ is convex,
\begin{equation}
  \label{eq:RobDiv1}
  J_{\varphi^*,\PC}(\nu):=\inf_{P\in\PC}J_{\varphi^*}(\nu|P),\quad\forall \nu\in \mathrm{ca},
\end{equation}
called \emph{\bfseries the robust $\varphi^*$-divergence functional},
is convex on $\ca(\Omega)$ and not identically $+\infty$. Further,
(\ref{eq:Young1}) yields
$\nu(f)\leq \EB_P[\varphi(\cdot, f)]+J_{\varphi^*}(\nu|P)\leq
I_{\varphi,\PC}(f)+J_{\varphi^*}(\nu|P)$ for any $f\in C_b(\Omega)$,
$\nu\in \ca(\Omega)$, $P\in\PC$, and taking the infimum over
$P\in\PC$,
\begin{equation}
  \label{eq:YoungRockafellar1}
  \nu(f)\leq  I_{\varphi,\PC}(f)+J_{\varphi^*,\PC}(\nu),\quad\forall f\in C_b(\Omega),\,\nu\in\ca(\Omega).
\end{equation}
In particular,
$J_{\varphi^*,\PC}(\nu)\geq -I_{\varphi,\PC}(0)>-\infty$, so
$J_{\varphi^*,\PC}$ is proper.

\subsection{A Duality Result}
\label{sec:MainInteg}

Now the main result of this section is the following:
\begin{theorem}[Rockafellar-Type Duality]
  \label{thm:Rockafellar1}
  Suppose (\ref{eq:Tight}), and Assumption~\ref{as:Integrand}. Then
  $I_{\varphi,\PC}:C_b(\Omega)\rightarrow\RB$ is
  $\tau(C_b(\Omega),\ca(\Omega))$-continuous, and its conjugate is
  given by
  \begin{equation}
    \label{eq:ConjCB1}
    I_{\varphi,\PC}^*(\nu)
    := \sup_{f\in C_b(\Omega)}\left(\nu(f)-I_{\varphi,\PC}(f)\right)=J_{\varphi^*,\PC}(\nu),
    \quad \nu\in \ca(\Omega).
  \end{equation}
  In particular, it holds that
  \begin{equation}
    \label{eq:DualRepMax}
    I_{\varphi,\PC}(f)=\max_{\nu\in \ca(\Omega)}\left(\nu(f)-J_{\varphi^*,\PC}(\nu)\right),\quad f\in C_b(\Omega).
  \end{equation}
\end{theorem}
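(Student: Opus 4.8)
The plan is to carry out the Rockafellar computation first on the larger dual pair $\langle C_b(\Omega),C_b(\Omega)'\rangle$ and only then descend to $\ca(\Omega)$. Two facts come for free. First, Young's inequality (\ref{eq:YoungRockafellar1}) gives $\nu(f)-I_{\varphi,\PC}(f)\le J_{\varphi^*,\PC}(\nu)$ for all $f\in C_b(\Omega)$ and $\nu\in\ca(\Omega)$, hence $I_{\varphi,\PC}^*\le J_{\varphi^*,\PC}$ on $\ca(\Omega)$. Second, by Lemma~\ref{lem:IntegFuncWellDefined} the functional $I_{\varphi,\PC}$ is finite, convex and norm-continuous on the Banach space $C_b(\Omega)$, so it is $\tau(C_b(\Omega),C_b(\Omega)')$-continuous; Lemma~\ref{lem:MoreauRockafellar} applied to $\langle C_b(\Omega),C_b(\Omega)'\rangle$ then already yields that $I_{\varphi,\PC}^*$ has $\sigma(C_b(\Omega)',C_b(\Omega))$-compact sublevel sets and that $I_{\varphi,\PC}(f)=\max_{F\in C_b(\Omega)'}(F(f)-I_{\varphi,\PC}^*(F))$. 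Everything then reduces to two points: (A) these sublevel sets lie inside $\ca(\Omega)$ and are $\sigma(\ca(\Omega),C_b(\Omega))$-compact; and (B) the conjugate restricted to $\ca(\Omega)$ is exactly $J_{\varphi^*,\PC}$.

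For (A) I would use a scaling estimate fed by the upper-integrability (\ref{eq:AssInteg3}). Fix $F$ with $I_{\varphi,\PC}^*(F)\le c$, so $F(g)\le I_{\varphi,\PC}(g)+c$ for every $g$. If $K\subset\Omega$ is compact and $g\in\BB_{C_b(\Omega)}$ vanishes on $K$, then for $t>0$ convexity bounds $\varphi(\cdot,tg)$ by $\varphi(\cdot,0)$ on $K$ and by $\varphi(\cdot,t)^+ +\varphi(\cdot,-t)^+$ on $K^c$, so that $I_{\varphi,\PC}(tg)\le\|\varphi(\cdot,0)\|_{1,\PC}+\sup_{P\in\PC}\EB_P[(\varphi(\cdot,t)^+ +\varphi(\cdot,-t)^+)\ind_{K^c}]$; the first term is finite by (\ref{eq:AssInteg3}) and (\ref{eq:AssInteg4}), and for fixed $t$ the second is $\le 1$ once $K$ is chosen through Lemma~\ref{lem:UIL1B} applied to $\varphi(\cdot,\pm t)^+\in\LB_{1,b}(\PC)$. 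Dividing $F(tg)\le I_{\varphi,\PC}(tg)+c$ by $t$ and replacing $g$ by $-g$ shows $|F(g)|\le(\|\varphi(\cdot,0)\|_{1,\PC}+1+c)/t$, which is arbitrarily small for $t$ large; by Lemma~\ref{lem:DualOfCb} this forces $F\in\ca(\Omega)$, and the same estimate (with $K$ depending only on $t$ and $c$, not on $F$) is precisely the uniform tightness, while taking $K=\emptyset$ and a single $t$ gives the uniform norm bound. Prokhorov's theorem (Lemma~\ref{lem:Prokhorov}), together with the $\sigma(\ca(\Omega),C_b(\Omega))$-closedness of the sublevel sets (lower semicontinuity of the conjugate), then makes them $\sigma(\ca(\Omega),C_b(\Omega))$-compact. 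Consequently the maximum above is attained in $\ca(\Omega)$, so $I_{\varphi,\PC}(f)=\max_{\nu\in\ca(\Omega)}(\nu(f)-I_{\varphi,\PC}^*(\nu))$; this exhibits $I_{\varphi,\PC}$ as a supremum of $\sigma(C_b(\Omega),\ca(\Omega))$-continuous affine functionals, hence it is $\sigma(C_b(\Omega),\ca(\Omega))$-lower semicontinuous, and a second application of Lemma~\ref{lem:MoreauRockafellar}, now to $\langle C_b(\Omega),\ca(\Omega)\rangle$, delivers the claimed $\tau(C_b(\Omega),\ca(\Omega))$-continuity and the maximum in (\ref{eq:DualRepMax}).

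For (B) it remains, in view of $I_{\varphi,\PC}^*\le J_{\varphi^*,\PC}$, to prove the reverse inequality. Writing $I_{\varphi,\PC}^*(\nu)=\sup_{f\in C_b(\Omega)}\inf_{P\in\PC}(\nu(f)-\EB_P[\varphi(\cdot,f)])$, I would interchange the supremum and infimum by Sion's minimax theorem: the objective is affine in $f$ and in $P$, the set $\PC$ is convex and $\sigma(\ca(\Omega),C_b(\Omega))$-compact by (\ref{eq:Tight}), and $P\mapsto\EB_P[\varphi(\cdot,f)]$ is weakly upper semicontinuous on $\PC$, combining the upper semicontinuity (\ref{eq:AssInteg2Better}) with the tight uniform integrability of $\varphi(\cdot,f)^+$ from (\ref{eq:AssInteg3Better}), so that the objective is lower semicontinuous in $P$ and upper semicontinuous in $f$. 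This gives $I_{\varphi,\PC}^*(\nu)=\inf_{P\in\PC}\sup_{f\in C_b(\Omega)}(\nu(f)-\EB_P[\varphi(\cdot,f)])$, and it then suffices to identify, for each fixed $P$, the inner supremum with $J_{\varphi^*}(\nu|P)$; taking the infimum over $P$ produces $J_{\varphi^*,\PC}(\nu)$.

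This inner identity is the crux and the main obstacle, precisely because $C_b(\Omega)$ is not decomposable and the classical Rockafellar formula on $L_\infty(P)$ is not directly at our disposal. The supremum over $C_b(\Omega)$ is trivially dominated by the supremum over $L_\infty(P)$, which classically equals $J_{\varphi^*}(\nu|P)$ when $\nu\ll P$, the singular term being absent because $f\mapsto\EB_P[\varphi(\cdot,f)]$ is finite everywhere on $L_\infty(P)$; what must be shown is that restricting the test functions to $C_b(\Omega)$ loses nothing. For this I would use that on a Polish space every finite Borel measure is Radon, so a bounded measurable near-maximiser $f$ can be approximated by $f_n\in C_b(\Omega)$ through Lusin's and Tietze's theorems with $\|f_n\|_\infty\le\|f\|_\infty$ and $f_n\to f$ in $(P+|\nu|)$-measure; continuity of $\varphi(\omega,\cdot)$ and domination of $\varphi(\cdot,f_n)$ by $\varphi(\cdot,\|f\|_\infty)^+ +\varphi(\cdot,-\|f\|_\infty)^+\in L_1(P)$ (from (\ref{eq:AssInteg3})) then give $\EB_P[\varphi(\cdot,f_n)]\to\EB_P[\varphi(\cdot,f)]$ and, since $\nu\ll P$, $\nu(f_n)\to\nu(f)$, so the two suprema coincide (the case $\nu\not\ll P$ giving $+\infty$ on both sides via a $P$-null, $|\nu|$-nonnull set). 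Keeping the $\varphi$-integrals controlled along this Lusin approximation is the delicate step, and it is exactly where the uniform-integrability hypothesis (\ref{eq:AssInteg3}) does the work that decomposability would otherwise provide. Taking the infimum over $P\in\PC$ yields $I_{\varphi,\PC}^*(\nu)=J_{\varphi^*,\PC}(\nu)$, i.e. (\ref{eq:ConjCB1}), which together with (A) completes the proof of (\ref{eq:DualRepMax}).
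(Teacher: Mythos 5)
Your proposal is correct, and its architecture is essentially the paper's: norm continuity plus Lemma~\ref{lem:MoreauRockafellar} applied to $\langle C_b(\Omega),C_b(\Omega)'\rangle$; a scaling estimate built from Lemma~\ref{lem:UIL1B} and Lemma~\ref{lem:DualOfCb} to confine the conjugate to $\ca(\Omega)$; a second application of Lemma~\ref{lem:MoreauRockafellar}, now for $\langle C_b(\Omega),\ca(\Omega)\rangle$, to get Mackey continuity; and, for the conjugate formula, a minimax interchange over the compact set $\PC$ followed by a fixed-$P$ reduction to the classical Rockafellar theorem via Lusin--Tietze approximation. You deviate in two places, both more packaging than substance. (i) For compactness of the sublevel sets you verify norm boundedness and uniform tightness and invoke Prokhorov (Lemma~\ref{lem:Prokhorov}); the paper instead observes that once $I_{\varphi,\PC}^*=+\infty$ off $\ca(\Omega)$, the sublevel set in $\ca(\Omega)$ \emph{is} the $\sigma(C_b(\Omega)',C_b(\Omega))$-compact sublevel set in $C_b(\Omega)'$, and $\sigma(\ca(\Omega),C_b(\Omega))$ is the restriction of the weak* topology, so no tightness criterion is needed. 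Your route works, but it silently uses the regularity fact $\sup\{|F(g)|:g\in\BB_{C_b(\Omega)},\ g=0\text{ on }K\}=|F|(\Omega\setminus K)$ for $F\in\ca(\Omega)$ to convert your functional estimate into measure-theoretic tightness. (ii) For fixed $P$ you apply Rockafellar's theorem directly on $L_\infty(P)$, which is only meaningful when $\nu\ll P$, and you dispose of $\nu\not\ll P$ in a parenthesis; the paper instead works on $L_\infty(\PB)$ for a single $\PB$ dominating both $\nu$ and $P$, with the rescaled integrand $\varphi_P=\frac{dP}{d\PB}\varphi$, whose conjugate is automatically $+\infty$ on $\{dP/d\PB=0,\ y\neq 0\}$, so the singular case is absorbed in one stroke. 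Also, your asserted weak upper semicontinuity of $P\mapsto\EB_P[\varphi(\cdot,f)]$ is exactly the paper's Lemma~\ref{lem:LSCP}; it follows from (\ref{eq:AssInteg2Better}) and (\ref{eq:AssInteg3Better}) only after a short truncation argument, which you should include.

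The one step you must genuinely expand is the case $\nu\not\ll P$: you need $f_t\in C_b(\Omega)$ with $\nu(f_t)-\EB_P[\varphi(\cdot,f_t)]\rightarrow\infty$, and this is a real construction of the same kind as your step (A), not a triviality. Concretely: using the Hahn decomposition and inner regularity, pick a compact $C$ inside a $P$-null set with $\nu(C)\neq 0$; choose an open $U\supset C$ with $|\nu|(U\setminus C)\leq|\nu(C)|/2$ and $P(U)$ so small that $\EB_P[(\varphi(\cdot,t)^++\varphi(\cdot,-t)^+)\ind_U]\leq 1$; take a Urysohn function $\ind_C\leq g\leq\ind_U$ and set $f_t=\pm tg$ with the sign of $\nu(C)$. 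Then $\nu(f_t)-\EB_P[\varphi(\cdot,f_t)]\geq t|\nu(C)|/2-\EB_P[\varphi(\cdot,0)^+]-1\rightarrow\infty$. With this supplied (and the two standard facts above made explicit), your proof is complete.
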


In view of the Moreau-Rockafellar theorem
(Lemma~\ref{lem:MoreauRockafellar}), the
$\tau(C_b(\Omega),\ca(\Omega))$-continuity is equivalent to (1)
$I_{\varphi,\PC}$ is $\sigma(C_b(\Omega),\ca(\Omega))$-lsc, and (2)
the conjugate $I_{\varphi,\PC}^*$ has
$\sigma(\ca(\Omega),C_b(\Omega))$-compact sublevels, i.e.
\begin{equation}
  \label{eq:InfCompact1}
  \forall c\in\RB,\,  \Lambda_c:=\{\nu\in\ca(\Omega): I_{\varphi,\PC}^*(\nu)\leq c\}
  \text{ is $\sigma(\ca(\Omega),C_b(\Omega))$-compact}.
\end{equation}
Regarding (1), we already know from
Lemma~\ref{lem:IntegFuncWellDefined} that $I_{\varphi,\PC}$ is
\emph{\bfseries sequentially} lsc for
$\sigma(C_b(\Omega),\ca(\Omega))$ (the
$\sigma(C_b(\Omega),\ca(\Omega))$-convergence implies the pointwise
convergence as $\ca(\Omega)$ contains the point masses), but it need
not imply the full lower semicontinuity (i.e. for nets, not only for
sequences). Indeed, unlike $L_\infty(\PB)$ which is a common choice of
the domain space in the dominated case, $C_b(\Omega)$ is neither a
dual (unless $\beta\Omega$ is extremally disconnected) nor a predual
space (unless $\Omega$ is compact). Thus the common techniques using
Krein-Šmulian theorem as well as a probabilistic description of the
Mackey topology on bounded sets (due to Grothendieck) are not
available.

\begin{remark}[Infinite $\varphi$?]
  \label{rem:InfinitePhi}
  \mbox{} Many of existing results on (classical) convex integral
  functionals are stated for possibly infinite proper integrand
  $\varphi$, where an additional singular term appears in the the
  conjugate as (\ref{eq:IntegFunctIntro}). In the robust case,
  however, our previous work \citep{MR3400157} suggests that this type
  of ``exact'' representation does not generally hold when $\varphi$
  is infinite (even if $\PC$ is dominated), and the current situation
  is even more complicated due to the non-decomposability of
  $C_b(\Omega)$.  Anyway, with infinite $\varphi$, we can no longer
  hope for the $\tau(C_b(\Omega),\ca(\Omega))$-continuity of the
  integral functional, and we are forced to work with the duality
  $\langle C_b(\Omega), C_b(\Omega)'\rangle$ which is not what we want
  in view of financial application.  We thus do not seek this
  direction in this paper.
\end{remark}

Given the $\tau(C_b(\Omega),\ca(\Omega))$-continuity (on the whole
$C_b(\Omega)$), the Fenchel duality theorem (see
e.g. \citep{MR743904}, Th.7.15 with $g=\delta_\CC$) yields that
\begin{corollary}[Meta duality]
  \label{cor:MetaDuality}
  Under the assumptions of Theorem~\ref{thm:Rockafellar1}, it holds
  for any nonempty convex set $\CC\subset C_b(\Omega)$ that
  \begin{equation}
    \label{eq:Metaduality1}
    \inf_{f\in \CC}I_{\varphi,\PC}(f)
    =-\min_{\nu\in \ca(\Omega)}\Bigl(J_{\varphi^*,\PC}(-\nu)
      +\sup_{g\in\CC}\nu(g)\Bigr).
    \end{equation}
    If in addition $\CC$ is a convex cone, the RHS is equal to
    $-\min_{\nu\in\CC^\medcirc}J_{\varphi^*,\PC}(-\nu)$, where
    $\CC^\medcirc$ is the one-sided polar of $\CC$ in
    $\langle C_b(\Omega),\ca(\Omega)\rangle$, i.e.
    \begin{align*}
      \CC^\medcirc=\{\nu\in\ca(\Omega): \nu(g)\leq 1,\,\forall g\in \CC\}.
    \end{align*}
\end{corollary}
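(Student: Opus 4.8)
The plan is to obtain the corollary as a direct application of the Fenchel duality theorem, feeding in the regularity already secured in Theorem~\ref{thm:Rockafellar1}. Concretely, I would work in the dual pair $\langle C_b(\Omega),\ca(\Omega)\rangle$ equipped with the Mackey topology $\tau(C_b(\Omega),\ca(\Omega))$, and set $F:=I_{\varphi,\PC}$ and $G:=\delta_\CC$, the convex indicator of $\CC$ (equal to $0$ on $\CC$ and $+\infty$ elsewhere). Then the left-hand side of (\ref{eq:Metaduality1}) is exactly the primal value of a Fenchel problem, since $\inf_{f\in\CC}I_{\varphi,\PC}(f)=\inf_{f\in C_b(\Omega)}\bigl(F(f)+G(f)\bigr)$.

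To invoke the duality theorem I first check its hypotheses. As $\CC\neq\emptyset$, $G$ is proper convex; and by Theorem~\ref{thm:Rockafellar1}, $F$ is finite and $\tau(C_b(\Omega),\ca(\Omega))$-continuous on all of $C_b(\Omega)$, in particular at every point of $\dom(G)=\CC$. This is precisely the constraint qualification required (continuity of one summand at a point where the other is finite), and it also yields attainment of the dual extremum, so the dual value is a genuine $\min$. The theorem (\citep{MR743904}, Th.~7.15) then gives $\inf_{f\in C_b(\Omega)}\bigl(F(f)+G(f)\bigr)=-\min_{\nu\in\ca(\Omega)}\bigl(F^*(\nu)+G^*(-\nu)\bigr)$, modulo the sign convention of the cited statement.

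It remains to identify the two conjugates and reconcile signs. By Theorem~\ref{thm:Rockafellar1}, $F^*=I_{\varphi,\PC}^*=J_{\varphi^*,\PC}$, while $G^*$ is the support function $G^*(\nu)=\sup_{g\in\CC}\nu(g)$ of $\CC$, so $G^*(-\nu)=\sup_{g\in\CC}\langle g,-\nu\rangle$. Substituting and then performing the harmless change of variable $\nu\mapsto-\nu$, which leaves the minimisation over all of $\ca(\Omega)$ unchanged, turns $-\min_{\nu}\bigl(J_{\varphi^*,\PC}(\nu)+\sup_{g\in\CC}\langle g,-\nu\rangle\bigr)$ into the right-hand side of (\ref{eq:Metaduality1}). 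For the addendum, when $\CC$ is a convex cone the support function degenerates: $\sup_{g\in\CC}\nu(g)$ equals $0$ if $\nu(g)\leq0$ for all $g\in\CC$ and $+\infty$ otherwise, because $\lambda g\in\CC$ for every $\lambda\geq0$ forces any strictly positive value $\nu(g)$ to blow up under scaling. Moreover, for a cone the threshold in $\CC^\medcirc$ is immaterial, since $\nu(g)\leq1$ for all $g\in\CC$ is equivalent to $\nu(g)\leq0$ for all $g\in\CC$; thus $\CC^\medcirc$ coincides with the polar cone and $\sup_{g\in\CC}\nu(g)=\delta_{\CC^\medcirc}(\nu)$. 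Hence this term collapses the dual minimisation to $-\min_{\nu\in\CC^\medcirc}J_{\varphi^*,\PC}(-\nu)$.

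The only substantive point is the continuity hypothesis of the Fenchel theorem, and this has already been discharged in Theorem~\ref{thm:Rockafellar1}; everything else is bookkeeping of conjugates, a sign change, and the elementary description of the support function of a cone. I therefore anticipate no real obstacle: the entire content of the corollary resides in the $\tau(C_b(\Omega),\ca(\Omega))$-continuity established earlier.
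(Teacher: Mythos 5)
Your proposal is correct and takes essentially the same route as the paper: the paper's entire proof consists of invoking the Fenchel duality theorem (\citep{MR743904}, Th.~7.15) with $g=\delta_\CC$, resting on the $\tau(C_b(\Omega),\ca(\Omega))$-continuity and the conjugate identity $I_{\varphi,\PC}^*=J_{\varphi^*,\PC}$ from Theorem~\ref{thm:Rockafellar1}, exactly as you do. The sign bookkeeping, the identification of $G^*$ with the support function of $\CC$, and the collapse to the polar cone $\CC^\medcirc$ in the cone case are precisely the details the paper leaves implicit, and you have carried them out correctly.
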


A typical and motivating example of normal integrand $\varphi$ is of the following type:

\begin{proposition}
  \label{prop:GoodIntegrand}
  Let $\varphi:\RB\rightarrow\RB$ be a (deterministic finite-valued)
  convex function, and $B:\Omega\rightarrow\RB$ be a usc function such
  that
  \begin{equation}
    \label{eq:PropGoodIntegAss1}
    \varphi((1+\varepsilon)B)^+\in \LB_{1,b}(\PC)
    \text{ and }\varphi(-\varepsilon B)^+\in \LB_1(\PC)\text{ for some }\varepsilon>0.
  \end{equation}
  Then $\varphi_B(\omega,x):=\varphi(x+B(\omega))$ satisfies
  Assumption~\ref{as:Integrand} with the conjugate
  \begin{align*}
    \varphi_B^*(\cdot, y)=\sup_x\left(xy-\varphi(x+B)\right)
    =\varphi^*(y)-yB.
  \end{align*}
  Thus under (\ref{eq:Tight}), the functional $I_{\varphi_B,\PC}$ is
  $\tau(C_b(\Omega),\ca(\Omega))$-continuous on $C_b(\Omega)$ with
  conjugate $J_{\varphi^*_B,\PC}$. Further,
  $J_{\varphi^*,\PC}(\nu)<\infty$ $\Leftrightarrow$
  $J_{\varphi^*_B,\PC}(\nu)<\infty$, and $B\in L_1(\nu)$ whenever
  $\in_{\lambda>0}J_{V,\PC}(\lambda\nu)<\infty$. In particular,
  \begin{equation}
    \label{eq:GoodInetgConj1}
    J_{\varphi^*_B,\PC}(\nu)
    =\ccases{
      J_{\varphi^*,\PC}(\nu)-\nu(B)&\text{ if }\inf_{\lambda>0}J_{\varphi^*,\PC}(\lambda\nu)<\infty,\\
      +\infty&\text{ otherwise.}}
  \end{equation}
\end{proposition}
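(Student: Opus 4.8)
The plan is to fit \(\varphi_B\) into the framework of Theorem~\ref{thm:Rockafellar1} and then transport the conjugate through the elementary shift identity \(\varphi_B^*(\cdot,y)=\varphi^*(y)-yB\); the only genuine work is the integrability bookkeeping that legitimises subtracting the term \(\nu(B)\).

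First I would verify Assumption~\ref{as:Integrand} for \(\varphi_B(\omega,x)=\varphi(x+B(\omega))\). Convexity (\ref{eq:AssInteg1}) is immediate, since \(x\mapsto x+B(\omega)\) is affine and \(\varphi\) convex. For the upper semicontinuity (\ref{eq:AssInteg2}) I would note that \(x+B\) is usc and invoke the monotonicity of the finite convex \(\varphi\) so that the composition stays usc; this is the one step sensitive to the direction of semicontinuity imposed on \(B\), and is the point to treat with care. For (\ref{eq:AssInteg3}) and (\ref{eq:AssInteg4}) I would use convexity of \(\varphi\) together with the two hypotheses in (\ref{eq:PropGoodIntegAss1}): writing, for fixed \(x\), the convex combination \(x+B=\tfrac{1}{1+\varepsilon}(1+\varepsilon)B+\tfrac{\varepsilon}{1+\varepsilon}\tfrac{(1+\varepsilon)x}{\varepsilon}\) gives \(\varphi(x+B)^+\le\tfrac{1}{1+\varepsilon}\varphi((1+\varepsilon)B)^++\text{const}\in\LB_{1,b}(\PC)\) (using that \(\LB_{1,b}(\PC)\) is solid: \(0\le\xi\le\eta\in\LB_{1,b}(\PC)\Rightarrow\xi\in\LB_{1,b}(\PC)\)), which is (\ref{eq:AssInteg3}); and \(0=\tfrac{\varepsilon}{1+\varepsilon}B+\tfrac{1}{1+\varepsilon}(-\varepsilon B)\) gives \(\varphi(B)^-\le\tfrac{1}{\varepsilon}\varphi(-\varepsilon B)^++\text{const}\in\LB_1(\PC)\), which is (\ref{eq:AssInteg4}). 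The conjugate \(\varphi_B^*(\cdot,y)=\sup_x(xy-\varphi(x+B))=\varphi^*(y)-yB\) follows by the substitution \(u=x+B\). Theorem~\ref{thm:Rockafellar1} then applies verbatim, giving the \(\tau(C_b(\Omega),\ca(\Omega))\)-continuity of \(I_{\varphi_B,\PC}\) and \(I_{\varphi_B,\PC}^*=J_{\varphi_B^*,\PC}\).

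The heart of the matter is to relate \(J_{\varphi_B^*,\PC}\) to \(J_{\varphi^*,\PC}\). Pointwise, \(\varphi_B^*(\cdot,d\nu/dP)=\varphi^*(\cdot,d\nu/dP)-B\,d\nu/dP\) whenever \(\nu\ll P\), so the identity \(J_{\varphi_B^*}(\nu|P)=J_{\varphi^*}(\nu|P)-\nu(B)\) will hold as soon as \(B\in L_1(\nu)\), so that \(\EB_P[B\,d\nu/dP]=\nu(B)\) is a finite number and no \(\infty-\infty\) occurs. To obtain integrability of \(B\) I would run a \emph{two-sided} Young inequality (\ref{eq:Young1}): pairing \(s=(1+\varepsilon)B\) and \(s=-\varepsilon B\) with \(t=d\nu/dP\) and substituting \(\varphi^*=\varphi_B^*+yB\) yields, after rearranging,
\begin{align*}
  \Bigl|B\,\tfrac{d\nu}{dP}\Bigr|
  \le \tfrac{1}{\varepsilon}\varphi((1+\varepsilon)B)^+
  +\tfrac{1}{1+\varepsilon}\varphi(-\varepsilon B)^+
  +C_\varepsilon\,\varphi_B^*\Bigl(\cdot,\tfrac{d\nu}{dP}\Bigr)^{+},
\end{align*}
and, entirely in parallel, a bound with \(\varphi^*(\cdot,\lambda\,d\nu/dP)^+\) (and a factor \(\lambda\) on the left) in place of the last term. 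The observation that dispels the apparent circularity in the final term is that the negative parts are free: from the definition of the conjugate, \(\varphi_B^*(\cdot,y)\ge-\varphi(B)\) and \(\varphi^*(y)\ge-\varphi(0)\), whence \(\varphi_B^*(\cdot,d\nu/dP)^-\le\varphi(B)^+\in\LB_1(\PC)\) and \(\varphi^*(\cdot,\lambda\,d\nu/dP)^-\le\varphi(0)^+\), so finiteness of \(J_{\varphi_B^*}(\nu|P)\) (resp.\ of \(J_{\varphi^*}(\lambda\nu|P)\)) already forces integrability of the corresponding positive part. Integrating the displayed bound against such a \(P\) then gives \(B\in L_1(\nu)\): this proves ``\(B\in L_1(\nu)\) whenever \(\inf_{\lambda>0}J_{\varphi^*,\PC}(\lambda\nu)<\infty\)'', and symmetrically that \(J_{\varphi_B^*,\PC}(\nu)<\infty\) likewise forces \(B\in L_1(\nu)\).

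Finally I would assemble (\ref{eq:GoodInetgConj1}). If \(\inf_{\lambda>0}J_{\varphi^*,\PC}(\lambda\nu)<\infty\), the above gives \(B\in L_1(\nu)\), hence \(J_{\varphi_B^*}(\nu|P)=J_{\varphi^*}(\nu|P)-\nu(B)\) for every \(P\) (both sides possibly \(+\infty\)); taking the infimum over \(P\in\PC\), with \(\nu(B)\) independent of \(P\), yields \(J_{\varphi_B^*,\PC}(\nu)=J_{\varphi^*,\PC}(\nu)-\nu(B)\). In the complementary case, were \(J_{\varphi_B^*,\PC}(\nu)\) finite then some \(P\) would make \(J_{\varphi_B^*}(\nu|P)<\infty\), whence \(B\in L_1(\nu)\) and \(J_{\varphi^*,\PC}(\nu)=J_{\varphi_B^*,\PC}(\nu)+\nu(B)<\infty\), contradicting \(\inf_{\lambda>0}J_{\varphi^*,\PC}(\lambda\nu)=+\infty\); thus \(J_{\varphi_B^*,\PC}(\nu)=+\infty\), and the equivalence \(J_{\varphi^*,\PC}(\nu)<\infty\Leftrightarrow J_{\varphi_B^*,\PC}(\nu)<\infty\) drops out. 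I expect the integrability bookkeeping of this last part---keeping both divergence functionals well defined in \((-\infty,+\infty]\) while subtracting the finite but a priori delicate \(\nu(B)\)---to be the main obstacle, with the verification of (\ref{eq:AssInteg2}) a secondary point requiring monotonicity of \(\varphi\).
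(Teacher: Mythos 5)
Your proposal is correct and follows essentially the same route as the paper's proof: your convex-combination bounds for (\ref{eq:AssInteg3}) and (\ref{eq:AssInteg4}) are exactly the paper's sandwich estimate (\ref{eq:ExEstim1}), and your ``two-sided Young inequality'' is the same inequality the paper obtains by conjugating that sandwich, namely (\ref{eq:ExEstimConj1}): since $\varphi_B^*(\cdot,y)=\varphi^*(y)-yB$, the conjugate sandwich rearranges precisely into your two one-sided bounds on $\pm\,yB$ coming from Young's inequality at $(1+\varepsilon)B$ and $-\varepsilon B$. Your endgame (the pointwise identity $J_{\varphi_B^*}(\nu|P)=J_{\varphi^*}(\nu|P)-\nu(B)$ once $B\in L_1(\nu)$, the infimum over $P$, and the contradiction in the complementary case) just writes out explicitly what the paper compresses into ``in particular, (\ref{eq:GoodInetgConj1}) holds.''

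The one genuine issue is the step you yourself flagged: for (\ref{eq:AssInteg2}) you ``invoke the monotonicity of the finite convex $\varphi$,'' but monotonicity is not among the hypotheses of Proposition~\ref{prop:GoodIntegrand}, so as written this step is not available. You should know, however, that the paper's own proof has the identical gap: it simply asserts that ``$\varphi_B$ is usc in $\omega$ since $B$ is,'' which is false for a general finite convex $\varphi$. Concretely, take $\varphi(u)=u^2$ and $B=-\ind_{F}$ with $F$ a nonempty open set that is not closed: $B$ is usc and bounded, so (\ref{eq:PropGoodIntegAss1}) holds, yet along a sequence in $F$ converging to a boundary point $\omega_0\notin F$ one has $\varphi(B(\omega_n))=1$ while $\varphi(B(\omega_0))=0$, so $\omega\mapsto\varphi_B(\omega,0)$ is not usc and (\ref{eq:AssInteg2}) fails --- and (\ref{eq:AssInteg2}) is genuinely used in Theorem~\ref{thm:Rockafellar1}, via the upper semicontinuity argument of Lemma~\ref{lem:LSCP}. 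The gap is harmless in the paper's application, because there $\varphi_U(x)=-U(-x)$ is nondecreasing ($U$ being increasing), and a continuous nondecreasing convex $\varphi$ composed with the usc function $x+B$ is indeed usc. So your proof stands on exactly the same footing as the paper's; to cover the proposition as literally stated, one should either add the hypothesis that $\varphi$ is nondecreasing (which suffices for Section~\ref{sec:RobUtil}) or require $B$ to be continuous.
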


\begin{proof}
  The deterministic convex function $\varphi$ clearly satisfies
  Assumption~\ref{as:Integrand}, and $\varphi_B$ is usc in $\omega$
  since $B$ is. Thus $\varphi_B$ is a Carathéodory integrand.
  Denoting $\rho_\alpha(x)=\frac1\alpha\varphi(\alpha x)^+$,
  $\alpha>0$, the convexity of $\varphi$ yields
  \begin{equation}
    \label{eq:ExEstim1}
    \frac{1+\varepsilon}\varepsilon \varphi\left(\frac\varepsilon{1+\varepsilon}x\right)
    -\rho_\varepsilon(- B)
    \leq\varphi_B(x)
    \leq\frac{\varepsilon}{1+\varepsilon}\varphi\left(\frac{1+\varepsilon}\varepsilon x\right)
    +\rho_{1+\varepsilon}(B).
  \end{equation}
  Thus (\ref{eq:PropGoodIntegAss1}) shows that $\varphi_B$ satisfies
  (\ref{eq:AssInteg3}) and (\ref{eq:AssInteg4}). Further, taking the
  conjugate,
  $\varphi_B^*(\cdot, y)=\sup_x\left(xy-\varphi(x+B)\right)
  =\varphi^*(y)-yB$, and
\begin{equation}
  \label{eq:ExEstimConj1}
  \begin{split}
    \frac\varepsilon{1+\varepsilon}\varphi^*(y)-\rho_{1+\varepsilon}(B)
    &\leq\varphi_B^*(y)
      \leq \frac{1+\varepsilon}\varepsilon\varphi^*(y)+\rho_\varepsilon(- B).
  \end{split}
\end{equation}
This shows that $J_{\varphi^*_B,\PC}(\nu)<\infty$ $\Leftrightarrow$
$J_{\varphi^*,\PC}(\nu)<\infty$ $\Rightarrow$ $B\in L_1(\nu)$. Then
noting that $B\in L_1(\nu)$ iff $B\in L_1(\lambda\nu)$ for some (then
any) $\lambda>0$, we see that $B\in L_1(\nu)$ whenever
$\inf_{\lambda>0}J_{V,\PC}(\lambda\nu)<\infty$; in particular,
(\ref{eq:GoodInetgConj1}) holds.
\end{proof}

\subsection{Proof of Theorem~\ref{thm:Rockafellar1}}
\label{sec:ProofIntFunct}

Though Theorem~\ref{thm:Rockafellar1} is stated entirely in terms of
the duality $\langle C_b(\Omega),\ca(\Omega)\rangle$ (with
$\ca(\Omega)$ rather than $C_b(\Omega)'$), the proof relies on the
duality $\langle C_b(\Omega),C_b(\Omega)'\rangle$.  By
Lemma~\ref{lem:IntegFuncWellDefined}, $I_{\varphi,\PC}$ is norm
continuous. Since the norm topology is the Mackey topology
$\tau(C_b(\Omega),C_b(\Omega)')$, Moreau-Rockafellar's theorem
(Lemma~\ref{lem:MoreauRockafellar}) tells us that
\begin{equation}
  \label{eq:IntegFunctConjCDual}
  I_{\varphi,\PC}(f)=\sup_{\nu\in C_b(\Omega)'}\left(\nu(f)-I_{\varphi,\PC}^*(\nu)\right),\quad f\in C_b(\Omega),
\end{equation}
where the conjugate
$I_{\varphi,\PC}^*(\nu)=\sup_{f\in
  C_b(\Omega)}(\nu(f)-I_{\varphi,\PC}(f))$ is now considered on
$C_b(\Omega)'$, and
$\{\nu\in C_b(\Omega)': I_{\varphi,\PC}(\nu)\leq c\}$, $c\in\RB$, are
$\sigma(C_b(\Omega)',C_b(\Omega))$-compact ($\Leftrightarrow$ closed
and bounded in norm). All we need to get the
$\tau(C_b(\Omega),\ca(\Omega))$-continuity of $I_{\varphi,\PC}$ is to
replace the dual $C_b(\Omega)'$ by $\ca(\Omega)$.

\begin{proof}[Proof of Theorem~\ref{thm:Rockafellar1}: Mackey continuity]
  We first claim that
  \begin{equation}
    \label{eq:ConjEliminteSingular}
    I_{\varphi,\PC}^*(\nu)=\infty\text{ if }\nu\in C_b(\Omega)'\setminus \ca(\Omega).
  \end{equation}
  To see this, note first that for any $\nu\in C_b(\Omega)'$,
    \begin{equation}
    \label{eq:ProofConjInfiniteIrregular1}
    \sup_{f\in C_b(\Omega)}\left(\nu(f)-I_{\varphi,\PC}(f)\right)
    \geq\sup_n \sup_{g\in \BB_{C_b(\Omega)}}\left(\nu (ng )-I_{\varphi,\PC}(n g)\right),
  \end{equation}
  while for any $g\in \BB_{C_b(\Omega)}$,
  $\varphi(\cdot, ng)^+\leq \varphi(\cdot, -n)^+ +\varphi(\cdot,
  n)^+=:\beta_n\in \LB_{1,b}(\PC)$ by convexity and
  (\ref{eq:AssInteg3}). Thus Lemma~\ref{lem:UIL1B} yields compact sets
  $K_n\subset \Omega$, $n\geq 1$, with
  \begin{align*}
    \sup_{g\in \BB_{C_b(\Omega)}}\sup_{P\in\PC}\EB_P[\varphi(\cdot, ng)^+\ind_{K_n^c}]
    \leq\sup_{P\in\PC}\EB_P[\beta_n\ind_{K_n^c}]\leq 1.
  \end{align*}
  On the other hand, if $\nu\in C_b(\Omega)'\setminus C_b(\Omega)$,
  Lemma~\ref{lem:DualOfCb} gives an $\varepsilon>0$ and a sequence
  $g_n\in\BB_{C_b(\Omega)}$ such that $g_n\ind_{K_n}=0$ and
  $\nu(g_n)>\varepsilon$ (since $|\nu(g_n)|=\nu(g_n)\vee \nu(-g_n)$);
  hence
  \begin{align*}
    \nu( ng_n)-I_{\varphi,\PC}(n g_n)
    &\geq n\varepsilon -\underbrace{\sup_{P\in\PC}\EB_P[\varphi(\cdot, ng_n)^+\ind_{K^c_n}]}_{\leq 1}
      -\underbrace{\sup_{P\in\PC}
      \EB_P[\varphi(\cdot, 0)^+]}_{<\infty}.
  \end{align*}
  Combined with (\ref{eq:ProofConjInfiniteIrregular1}), we deduce
  $\sup_{f\in C_b(\Omega)}(\nu(f)-I_{\varphi,\PC}(f))=\infty$.

  Now by (\ref{eq:ConjEliminteSingular}) and
  (\ref{eq:IntegFunctConjCDual}), we have
  \begin{align*}
    I_{\varphi,\PC}(f)
    \stackrel{\text{(\ref{eq:IntegFunctConjCDual})}}=
    \sup_{\nu\in C_b(\Omega)'}\left(\nu(f)-I_{\varphi,\PC}^*(\nu)\right)
    \stackrel{\text{(\ref{eq:ConjEliminteSingular})}}=
    \sup_{\nu\in \ca(\Omega)}\left(\nu(f)-I_{\varphi,\PC}^*(\nu)\right).
  \end{align*}
  Thus $I_{\varphi,\PC}$ is
  $\sigma(C_b(\Omega),\ca(\Omega))$-lsc. (\ref{eq:ConjEliminteSingular})
  shows also that the sublevel set $\Lambda_c$ in $\ca(\Omega)$
  coincides with that considered in $C_b(\Omega)'$, i.e.
  \begin{align*}
    \Lambda_c=\left\{\nu\in\ca(\Omega):I_{\varphi,\PC}^*(\nu)\leq c\right\}
    =\left\{\nu\in C_b(\Omega)':I_{\varphi,\PC}^*(\nu)\leq c\right\}.
  \end{align*}
  As noted above (see the comment following
  (\ref{eq:IntegFunctConjCDual})), the last set is
  $\sigma(C_b(\Omega)',C_b(\Omega))$-compact. Consequently,
  $\Lambda_c$ is a $\sigma(C_b(\Omega)',C_b(\Omega))$-compact subset
  of $C_b(\Omega)'$ lying in $\ca(\Omega)$, so it is
  $\sigma(C_b(\Omega)',C_b(\Omega))|_{\ca(\Omega)}
  =\sigma(\ca(\Omega),C_b(\Omega))$-compact. Now being
  $\sigma(C_b(\Omega),\ca(\Omega))$-lsc with the conjugate having
  $\sigma(\ca(\Omega),C_b(\Omega))$-compact sublevels,
  Lemma~\ref{lem:MoreauRockafellar} shows that $I_{\varphi,\PC}$ is
  $\tau(C_b(\Omega),\ca(\Omega))$-continuous.
\end{proof}

We proceed to the conjugate formula (\ref{eq:ConjCB1}). We derive it
from the classical Rockafellar theorem on $L_\infty(\PB)$ and a
minimax argument. The latter needs the following simple lemma.
\begin{lemma}
  \label{lem:LSCP}
  Suppose (\ref{eq:Tight})--(\ref{eq:AssInteg4}). Then for each
  $f\in C_b(\Omega)$, $P\mapsto \EB_P[\varphi(\cdot, f)]$ is (affine,
  hence concave and) $\sigma(\ca(\Omega),C_b(\Omega))$-usc on $\PC$.
\end{lemma}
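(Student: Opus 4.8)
The concavity is immediate, since $P\mapsto\EB_P[\varphi(\cdot,f)]$ is the restriction to $\PC$ of the linear functional $\nu\mapsto\int_\Omega\varphi(\cdot,f)\,d\nu$, defined on those measures that integrate $\varphi(\cdot,f)$; the real content is the upper semicontinuity. Fix $f\in C_b(\Omega)$ and write $g:=\varphi(\cdot,f)$. My plan is to exploit three facts about $g$: it is usc by \eqref{eq:AssInteg2Better}; it is bounded above by $\beta:=\varphi(\cdot,-\|f\|_\infty)^++\varphi(\cdot,\|f\|_\infty)^+\in\LB_{1,b}(\PC)$ (convexity and \eqref{eq:AssInteg3}); and, with $\eta$ as in \eqref{eq:IntegrandInteg2}, it is bounded below by $h:=-\|f\|_\infty|\eta|-\varphi^*(\cdot,\eta)^+\in\LB_1(\PC)$, so that $g\in L_1(P)$ for every $P\in\PC$. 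The goal is to show $\limsup_\alpha\EB_{P_\alpha}[g]\leq\EB_P[g]$ whenever $P_\alpha\to P$ in $\PC$ for $\sigma(\ca(\Omega),C_b(\Omega))$.

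The first step is to peel off the positive tail uniformly in $\PC$. Since $(g-k)^+\leq\beta\ind_{\{\beta>k\}}$, the defining property of $\LB_{1,b}(\PC)$ gives $\varepsilon_k:=\sup_{P\in\PC}\EB_P[(g-k)^+]\to0$ as $k\to\infty$; hence $\EB_{P'}[g]\leq\EB_{P'}[g\wedge k]+\varepsilon_k$ for every $P'\in\PC$. Because $\EB_{P'}[g\wedge k]\leq\EB_{P'}[g]$ as well, it suffices to prove that $P\mapsto\EB_P[g\wedge k]$ is $\sigma(\ca(\Omega),C_b(\Omega))$-usc on $\PC$ for each fixed $k$, and then let $k\to\infty$.

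For the reduced problem the function $u:=g\wedge k$ is usc and bounded above by the constant $k$, so I would regularise it from above through its Pasch--Hausdorff envelopes $u_n(\cdot):=\sup_{y}\bigl(u(y)-n\,d(\cdot,y)\bigr)$, where $d$ is a compatible metric on $\Omega$. These $u_n$ are $n$-Lipschitz, bounded above by $k$, and decrease pointwise to $u$, while their lower truncations $v_{n,j}:=u_n\vee(-j)$ lie in $C_b(\Omega)$ and satisfy $u\leq u_n\leq v_{n,j}$. Thus for a net $P_\alpha\to P$ in $\PC$ one has $\EB_{P_\alpha}[u]\leq\EB_{P_\alpha}[v_{n,j}]\to\EB_P[v_{n,j}]$ since $v_{n,j}\in C_b(\Omega)$, whence $\limsup_\alpha\EB_{P_\alpha}[u]\leq\EB_P[v_{n,j}]$ for all $n,j$. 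Letting first $j\to\infty$ (so $v_{n,j}\downarrow u_n$) and then $n\to\infty$ (so $u_n\downarrow u$), monotone convergence --- legitimate because all these functions are dominated above by $k\in L_1(P)$ and bounded below by $h\in L_1(P)$, so every integral is finite --- yields $\limsup_\alpha\EB_{P_\alpha}[u]\leq\EB_P[u]$. Combining with the first step gives $\limsup_\alpha\EB_{P_\alpha}[g]\leq\EB_P[g]+\varepsilon_k$ for every $k$, and $\varepsilon_k\to0$ closes the argument.

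I expect the main obstacle to be exactly the unboundedness of $g$: because $\Omega$ is not compact and $\PC$ is not dominated, $g=\varphi(\cdot,f)$ need be neither bounded above nor uniformly integrable from below, so neither the classical usc portmanteau theorem nor a uniform-integrability argument applies directly. The two-stage device --- first subtracting a uniformly small positive tail, controlled by the $\LB_{1,b}(\PC)$-membership of $\beta$ (cf.\ Lemma~\ref{lem:UIL1B}), and then approximating the bounded-above remainder $g\wedge k$ from above by $C_b(\Omega)$ functions --- is precisely what reconciles the upper semicontinuity of $g$ with the weak topology generated by $C_b(\Omega)$.
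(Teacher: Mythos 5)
Your proof is correct, and it shares the paper's essential ingredient --- the uniform-over-$\PC$ control of the positive tail of $g=\varphi(\cdot,f)$ furnished by (\ref{eq:AssInteg3}) --- but implements the reduction differently. The paper truncates from below first, writing $\EB_P[g]=\inf_m\EB_P[g\vee(-m)]$ (a pointwise infimum of usc functions is usc, so no uniformity is needed on that side), and only then truncates from above, invoking the portmanteau-type fact that $P\mapsto\EB_P[h]$ is weakly usc for \emph{bounded} usc $h$ (\citep[Th.~15.5]{MR2378491}) together with the uniform estimate on $g^+\ind_{\{g>n\}}$; it then concludes along sequences, using metrisability of the weak topology on $\mathrm{Prob}(\Omega)$. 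You reverse the order --- peeling off the positive tail uniformly first via $(g-k)^+\leq\beta\ind_{\{\beta>k\}}$ --- and then, instead of citing the bounded-usc portmanteau result, you reprove it in a self-contained way through Pasch--Hausdorff envelopes $u_n$ and their lower truncations $v_{n,j}\in C_b(\Omega)$, absorbing the lower truncation into a monotone-convergence step. Both orders succeed for the same structural reason: upper semicontinuity is stable under decreasing limits and infima, so no uniformity is needed from below, while the upper-tail direction is exactly where uniformity in $P$ (i.e.\ the $\LB_{1,b}(\PC)$-membership coming from (\ref{eq:AssInteg3})) is indispensable; your write-up makes this division of labour explicit. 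Working with nets also renders the metrisability remark unnecessary, which is a mild gain in generality. Two cosmetic points: the fact that $u_n\downarrow u$ pointwise for a usc function $u$ bounded above deserves a one-line justification or a reference (it genuinely uses both hypotheses), and the integrable lower bound for $u=g\wedge k$ is $h\wedge k$ rather than $h$ itself --- harmless, since $h\wedge k\in L_1(P)$ for every $P\in\PC$, so the sandwiching that legitimises the monotone-convergence step is intact.
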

\begin{proof}
  Let $g:=\varphi(\cdot, f)$ which is usc with
  $g^+\in \mcap_{P\in\PC}L_1(P)$, and $g_m:=g\vee (-m)$.  Since
  $\EB_P[g]=\inf_m\EB_P[g_m]$, it suffices that $P\mapsto \EB_P[g_m]$,
  $m\geq 1$, are usc.  For each $n$, $g_m\wedge n$ is a bounded usc
  function, so $P\mapsto \EB_P[g_m\wedge n]$ is
  $\sigma(\ca(\Omega),C_b(\Omega))$-usc on
  $\PC\subset\mathrm{Prob}(\Omega)$ (see
  e.g. \citep[Th.~15.5]{MR2378491}).  Then note that
  $g_m-g_m\wedge n\leq g_m\ind_{\{g_m>n\}}=g^+\ind_{\{g>n\}}$, so
  $\lim_n\sup_{P\in \PC}\EB_P\left[g_m-g_m\wedge n\right]=0$ by
  (\ref{eq:AssInteg3Better}) ($\Leftarrow$ (\ref{eq:AssInteg3})). Thus
  if $P_k\rightarrow P$ in $(\PC,\sigma(\ca(\Omega),C_b(\Omega))$, one
  has
  \begin{align*}
    \limsup_k    \EB_{P_k}[g_m]
    &\leq \sup_{P\in\PC}\EB_{P}[g_m-g_m\wedge n]
      +\underbrace{\limsup_k\EB_{P_k}[g_m\wedge n]}_{\leq \EB_P[g_m\wedge n]\leq\EB_P[g_m]}.
  \end{align*}
  Letting $n\rightarrow\infty$, we get
  $\limsup_k\EB_{P_k}[g_m]\leq \EB_P[g_m]$. Since
  $\sigma(\ca(\Omega),C_b(\Omega))$ is metrisable on
  $\PC\subset\mathrm{Prob}(\Omega)$, this proves the claim.
\end{proof}

\begin{proof}[Proof of Theorem~\ref{thm:Rockafellar1}: the conjugate
  formula (\ref{eq:ConjCB1})]
  \mbox{} Given $\nu\in \ca(\Omega)$, the function
  $(f,P)\mapsto \nu(f)-\EB_P[\varphi(\cdot,f)]$ on
  $C_b(\Omega)\times\PC$, is concave in $f\in C_b(\Omega)$, and
  $\sigma(\ca(\Omega),C_b(\Omega))$-lsc and convex in $P\in\PC$ by
  Lemma~\ref{lem:LSCP}. Since the set $\PC$ is
  $\sigma(\ca(\Omega),C_b(\Omega))$-compact, the (usual) minimax
  theorem yields that
  \begin{align*}
    \sup_{f\in C_b(\Omega)}\left(\nu(f)-I_{\varphi,\PC}(f)\right)
    &=\sup_{f\in C_b(\Omega)}\inf_{P\in\PC}\left(\nu(f)-\EB_P[\varphi(\cdot, f)]\right)\\
    &=\inf_{P\in\PC}\sup_{f\in C_b(\Omega)}\left(\nu(f)-\EB_P[\varphi(\cdot, f)]\right).
  \end{align*}
  Therefore it suffices to show that
  \begin{equation}
    \label{eq:RockafellarProofFinal1}
    \forall \nu\in\ca(\Omega),\,\forall P\in\PC,\,
    \sup_{f\in C_b(\Omega)}\left(\nu(f)-\EB_P[\varphi(\cdot, f)]\right)=J_\varphi(\nu|P).
  \end{equation}
  So fix $\nu\in\ca(\Omega)$, $P\in\PC$, and pick a probability
  measure $\PB$ on $(\Omega,\BC(\Omega))$ with $\nu,P\ll\PB$
  (e.g. $\PB=\frac12(|\nu|/\|\nu\|+P)$). Then consider
  \begin{align*}
    \varphi_P(\omega,x):=\frac{dP}{d\PB}\varphi(\omega,x).
  \end{align*}
  This is a finite-valued normal convex integrand with the conjugate
  \begin{align*}
    \varphi_P^*(\cdot, y)=\frac{dP}{d\PB}\varphi^*(\cdot, y/(dP/d\PB))\ind_{\{dP/d\PB>0\}}
    +\infty\ind_{\{dP/d\PB=0, y\neq 0\}}.
  \end{align*}
  Note that $\varphi_P(\cdot, x)^+\in L_1(\PB)$ for all $x\in\RB$ by
  (\ref{eq:AssInteg3}), and by (\ref{eq:IntegrandInteg2}),
  $\varphi^*_P(\cdot, \zeta)^+\in L_1(\PB)$ for some
  $\zeta\in L_1(\PB)$ (with a slight abuse of notation,
  $\zeta=\eta \frac{dP}{d\PB}$, with the $\eta$ in
  (\ref{eq:IntegrandInteg2}) does the job). Thus the classical
  Rockafellar theorem (\cite[Th.1]{MR0310612}) shows that
  \begin{align*}
    \sup_{\xi\in L_\infty(\PB)}\left(\nu(\xi)-\EB_\PB[\varphi_P(\cdot, \xi)]\right)
    =\EB_\PB\left[\varphi^*_P(\cdot, d\nu/d\PB)\right]
    =J_\varphi(\nu|P),
  \end{align*}
  where note that
  $\frac{d\nu}{d\PB}/\frac{dP}{d\PB}=\frac{d\nu}{d\PB}$ if $\nu\ll P$
  etc. Thus it remains to show that
  \begin{align}
    \label{eq:RockafellarCbProof1}
    \sup_{f\in C_b(\Omega)}\left(\nu(f)-\EB_P[\varphi(\cdot, f)]\right)
    =\sup_{\xi\in L_\infty(P)}\left(\nu(\xi)-\EB_\PB[\varphi_P(\cdot, \xi)]\right).
  \end{align}
  Of course, ``$\leq$'' is clear. For ``$\geq$,'' let
  $\xi\in L_\infty(\PB)$ and pick a bounded representative $f\in\xi$ (relative
  to $L_\infty(\PB)$).  Now for each $\varepsilon>0$, Lusin's theorem
  yields a compact set $K_\varepsilon\subset\Omega$ such that
  $\PB(K_\varepsilon^c)<\varepsilon$ and $f|_{K_\varepsilon}$ is
  continuous, then Tietze's theorem gives us its continuous extension
  $f_\varepsilon\in C(\Omega)$ with
  $\|f_\varepsilon\|_\infty=
  \|f|_{K_\varepsilon}\|_\infty\leq\|\xi\|_\infty$. Then noting that
  $\|g\|_\infty\leq c$ $\Rightarrow$
  $|\varphi(\cdot, g)|\leq \varphi(\cdot,
  c)^++\varphi(\cdot,-c)^++c+\varphi^*(\cdot, \eta)^+=:\kappa_c\in
  L_1(P)$ where $\eta$ is as in (\ref{eq:IntegrandInteg2}),
  \begin{align*}
    \nu(\xi)
    &-\EB_\PB[\varphi_P(\cdot, \xi)]\\
    &= \nu(f_\varepsilon)-\EB_P[\varphi(\cdot, f_\varepsilon)]
      +\nu((f-f_\varepsilon)\ind_{K^c_\varepsilon})
      +\EB_P[\{\varphi(\cdot, f_\varepsilon)-\varphi(\cdot,f)\}\ind_{K^c_\varepsilon}]
    \\
    &\leq\sup_{g\in C_b(\Omega)}\left(\nu(g)-\EB_P[\varphi(\cdot,g)]\right)
      +2\|\xi\|_\infty|\nu|(K^c_\varepsilon)
      +2\EB_P[\kappa_{\|\xi\|_\infty}\ind_{K^c_\varepsilon}].
  \end{align*}
  The last two terms tend to $0$ as $\varepsilon\rightarrow 0$ since
  $P,|\nu|\ll\PB$.
\end{proof}

\section{Semistatic Robust Utility Indifference Valuation}
\label{sec:RobUtil}

We proceed to the robust utility maximisation problem. Let
$\Omega=\RB^N$ ($N\in\NB$), which we think of as the $N$-period
discrete time path-space, and $S=(S_i)_{1\leq i\leq N}$ the coordinate
process, i.e.
$(S_i(\omega))_{1\leq i\leq N}=\mathrm{id}_{\RB^N}(\omega)$,
$\omega\in\RB^N$, with $S_0(\omega)=s_0$ (constant) as the
(discounted) underlying assets.  Also, we are given a set $\PC$ of
Borel probability measures on $\RB^N$, viewed as the set of possible
models for $S$. As in Section~\ref{sec:RobInteg}, we suppose
\begin{equation}
  \label{eq:PAs1}
  \PC\text{ is a convex and $\sigma(\mathrm{ca}(\RB^N), C_b(\RB^N))$-compact}.
\end{equation}

Let $\HC$ denote the vector space of processes
$H=(H_i)_{1\leq i\leq N}$ such that
\begin{equation}
  \label{eq:SetH}
  H_1\text{ is constant; }H_t=h_t(S_1,...,S_{t-1})\text{ for some }h_t\in C_b(\RB^{t-1}),\, \forall i\geq 2.
\end{equation}
Each $H\in\HC$ is predictable (for the filtration generated by $S$),
and is thought of as a self-financing dynamic strategy with gain
$H\bullet S_t:=\sum_{i\leq t}H_i(S_i-S_{i-1})$, the discrete
stochastic integral. Note also that for a probability measure
$Q\in\mathrm{Prob}(\RB^N)$,
\begin{equation}
  \label{eq:MGMeasDiscrete1}
  S\text{ is a $Q$-martingale }\Leftrightarrow\, S_t\in L_1(Q),\,\forall t
  \text{ and }\EB_Q[H\bullet S_N]=0,\,\forall H\in\HC.
\end{equation}

The next ingredient is a family $\mu=(\mu_i)_{1\leq i\leq N}$ of
distributions on $\RB$ such that\nobreak
\begin{align*}
  \MC_\mu:=\left\{Q\in\mathrm{Prob}(\RB^N):
  S\text{ is a $Q$-martingale, }(Q\circ S_i^{-1})_{i\leq N}=(\mu_i)_{i\leq N}\right\}
  \neq\emptyset,
\end{align*}
for which it is necessary and sufficient that:
\begin{equation}
  \label{eq:Strassen1}
  \begin{minipage}{.85\linewidth}
    $\int|x|d\mu_i<\infty$, $\int x d\mu_i=s_0$ and
    $i\mapsto \int f d\mu_i$ is increasing for every convex function
    $f:\RB\rightarrow\RB$ (i.e. increasing in convex order).
  \end{minipage}
\end{equation}
This is \emph{\bfseries Strassen's theorem} (\citep{MR177430},
Th.~8). Then $\MC_\mu$ is a $\sigma(\ca(\RB^N),C_b(\RB^N))$-compact
convex set (\citep{MR3066985}, Prop.~2.4).  In (idealised) reality,
such a family $(\mu_i)_{i\leq N}$ is calculated from the prices of
call options via the relation (due to \citep{breeden78:_prices}):
\begin{align*}
  Q(\xi\leq K)
  =1+\lim_{\varepsilon\downarrow 0}\frac{\EB_Q[(\xi-K-\varepsilon)^+]-\EB_Q[(\xi-K)^+]}{\varepsilon},
\end{align*}
(if call options of all the strikes are available). In this sense,
each $Q\in\MC_\mu$ is a pricing measure \emph{\bfseries calibrated to
  the call prices in the market} (see e.g. \citep{MR2762363} for more
detailed exposition). Then every vanilla option with payoff function
$f\in C_b(\RB)$ and maturity $i\leq N$ is priced at
$\EB_Q[f(S_i)]=\mu_i(f)$ for all $Q\in\MC_\mu$; thus the final gain
from investing in $(f,i)$ is $f(S_i)-\mu_i(f)$. A static position is
any $(f_i)_{i\leq N}\in C_b(\RB)^N$ where each $f_i$ is a vanilla
option maturing at $i$, and any pair
$(H,(f_i)_{i\leq N})\in \HC\times C_b(\RB)^N$ is called a
\emph{\bfseries semistatic strategy}, whose gain is
\begin{align*}
  H\bullet S_N+\sum\nolimits_{i\leq N}(f_i(S_i)-\mu_i(f_i))
  =:H\bullet S_N+\Gamma_{(f_i)_{i\leq N}}.
\end{align*}

Finally, let $U:\RB\rightarrow\RB$ be a utility function (finite on
the whole $\RB$; e.g. exponential) that is strictly concave,
differentiable and satisfies the \emph{\bfseries Inada condition}
\begin{equation}
  \label{eq:Inada}
  \lim_{x\downarrow-\infty}  U'(x)=+\infty\quad\text{and}\quad
  \lim_{x\uparrow\infty}U'(x)=0.
\end{equation}
Then its conjugate $V(y):=\sup_{x\in\RB}(U(x)-xy)$ is a proper convex
function such that $\Int\dom(V)=(0,\infty)$ on which it is strictly
convex, differentiable, and
\begin{equation}
  \label{eq:VConj1}
V'(0):=\lim_{y\downarrow 0}V'(y)=-\infty,\, V'(\infty):=\lim_{y\uparrow\infty}V'(y)=+\infty.
\end{equation}
Now for each initial cost $x\in\RB$ and (the payoff function of) an
exotic option $\Psi:\RB^N\rightarrow\RB$, we consider the robust
utility maximisation with \emph{\bfseries semistatic strategies}:
\begin{equation}
  \label{eq:ValueFunctDiscrete1}
  u_\Psi(x):=
  \sup_{H\in\HC, f\in C_b(\RB)^N}\inf_{P\in\PC}\EB_P\left[U\left(x+H\bullet S_N +\Gamma_f-\Psi\right)\right].
\end{equation}

The main result of this Section is the following.
\begin{theorem}[Duality]
  \label{thm:DiscreteDuality}
  Suppose (\ref{eq:PAs1}), (\ref{eq:Strassen1}), (\ref{eq:Inada}) as
  well as
  \begin{align}
    \label{eq:UtilFinite1}
    &    \inf_{\lambda>0, Q\in\MC_\mu}J_{V,\PC}(\lambda Q)<\infty;\\
    \label{eq:DiscDualitySInteg1}
    &  
      \lim_n\sup_{P\in\PC}\EB_P\left[U(\alpha|S_i|)^-\ind_{\{|S_i|>n\}}\right]=0,
      \quad\forall i\in\{1,...,N\}, \forall \alpha>0.
  \end{align}
  Then for any upper semicontinuous function
  $\Psi:\RB^N\rightarrow\RB$ with linear growth (i.e.
  $|\Psi(\omega)|\leq c(1+|\omega_1|+\cdots+|\omega_N|)$ for some
  $c>0$), it holds that
  \begin{equation}
    \label{eq:DiscDuality1}
    u_\Psi(x)
    =\min_{\lambda> 0, Q\in\MC_\mu}\left(J_{V,\PC}(\lambda Q)
      -\lambda\EB_Q[\Psi]+\lambda x\right).
  \end{equation}
\end{theorem}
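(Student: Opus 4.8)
The strategy is to reduce the duality for $u_\Psi(x)$ to a "meta duality" of the form in Corollary~\ref{cor:MetaDuality}, applied to a suitable robust integral functional on $C_b(\RB^N)$, and then to identify the dual problem with the minimisation over $\MC_\mu$ in \eqref{eq:DiscDuality1}. The natural integrand is $\varphi(\omega,t):=U(t-\Psi(\omega))$ — or, absorbing the wealth, $\varphi_\Psi(\omega,t):=-U(t+x-\Psi(\omega))$ to put it in the convex/sup framework — whose robust integral functional is $I_{\varphi_\Psi,\PC}(g)=\sup_{P\in\PC}\EB_P[-U(x+g-\Psi)]$. I would first verify that this $\varphi_\Psi$ satisfies Assumption~\ref{as:Integrand}: finiteness and convexity in $t$ come from $U$ being finite concave; upper semicontinuity in $\omega$ from $\Psi$ being usc (so $-U(t+x-\Psi(\cdot))$ is usc); and the integrability conditions \eqref{eq:AssInteg3}--\eqref{eq:AssInteg4} should follow from the linear-growth of $\Psi$ together with \eqref{eq:DiscDualitySInteg1} and the finiteness hypothesis \eqref{eq:UtilFinite1}. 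Proposition~\ref{prop:GoodIntegrand} (with $\varphi\leftarrow -U$ and $B\leftarrow x-\Psi$, up to sign conventions) is tailor-made for exactly this verification and should let me conclude that $I_{\varphi_\Psi,\PC}$ is $\tau(C_b(\RB^N),\ca(\RB^N))$-continuous with conjugate given by the robust $V$-divergence.

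Next I would rewrite the value function. Since $\xi=H\bullet S_N+\Gamma_f$ ranges over the linear space $\CC$ of semistatic gains, we have
\begin{align*}
  u_\Psi(x)=\sup_{\xi\in\CC}\inf_{P\in\PC}\EB_P[U(x+\xi-\Psi)]
  =-\inf_{\xi\in\CC}I_{\varphi_\Psi,\PC}(\xi).
\end{align*}
Here a technical point arises: the payoffs $H\bullet S_N$ and $f_i(S_i)$ are generally unbounded, so they are not literally elements of $C_b(\RB^N)$, and the meta duality is stated for $\CC\subset C_b(\Omega)$. I would handle this either by extending $I_{\varphi_\Psi,\PC}$ to an appropriate space containing these gains, or — cleaner — by first reducing to bounded strategies via a truncation/approximation argument (using \eqref{eq:DiscDualitySInteg1} to control the error from truncating $S_i$), so that $\CC$ effectively sits inside $C_b(\RB^N)$. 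Granting this, Corollary~\ref{cor:MetaDuality} applied to the linear subspace $\CC$ gives
\begin{align*}
  \inf_{\xi\in\CC}I_{\varphi_\Psi,\PC}(\xi)
  =-\min_{\nu\in\CC^\perp}J_{\varphi_\Psi^*,\PC}(-\nu),
\end{align*}
where $\CC^\perp=\{\nu\in\ca(\RB^N):\nu(g)=0,\ \forall g\in\CC\}$ is the annihilator (a linear subspace polar).

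The crux is then to identify $\CC^\perp$ and to compute the conjugate integrand. The annihilator condition $\nu(H\bullet S_N)=0$ for all $H\in\HC$ together with $\nu(f_i(S_i)-\mu_i(f_i))=0$ for all $f_i\in C_b(\RB)$ should, after normalising $\nu$ to a probability measure scaled by some $\lambda>0$ (writing $\nu=\lambda Q$), be exactly the statement that $Q$ is a martingale measure for $S$ with marginals $\mu_i$ — i.e. $Q\in\MC_\mu$; the martingale property is encoded by \eqref{eq:MGMeasDiscrete1} via the $H\bullet S_N$ terms, and the full marginal constraint \eqref{eq:MarginalsIntro} is encoded by the static $f_i$ terms ranging over all of $C_b(\RB)$. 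Meanwhile $\varphi_\Psi^*(\omega,y)=V(y)+y(x-\Psi(\omega))$ by the Fenchel-conjugate computation for the shifted utility (Proposition~\ref{prop:GoodIntegrand} gives the $-yB$ shift), so
\begin{align*}
  J_{\varphi_\Psi^*,\PC}(-\lambda Q)=J_{V,\PC}(\lambda Q)-\lambda\EB_Q[\Psi]+\lambda x,
\end{align*}
which reproduces the right-hand side of \eqref{eq:DiscDuality1}. The main obstacle I anticipate is precisely the unboundedness issue in the previous paragraph — justifying that the supremum over unbounded semistatic gains is captured by the $C_b$-based functional, and that the sign/normalisation bookkeeping (ensuring the minimising $\nu$ is $-\lambda Q$ with $\lambda>0$ and $Q\in\MC_\mu$, rather than a signed measure or a degenerate $\lambda=0$) is airtight. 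The conditions \eqref{eq:Inada}/\eqref{eq:VConj1} on $V'$ at $0$ and $\infty$, together with \eqref{eq:UtilFinite1}, are what should rule out the degenerate cases and force the optimal $\lambda$ to be strictly positive and finite.
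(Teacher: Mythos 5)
Your plan founders exactly at the step you flag as ``a technical point,'' and neither of your proposed remedies repairs it. First, a correction: the static gains $\Gamma_f$, $f\in C_b(\RB)^N$, \emph{are} bounded continuous functions on $\RB^N$, so they pose no problem; the genuinely unbounded objects are the dynamic gains $H\bullet S_N$ (unbounded whenever $H\neq 0$, since $H_i$ is bounded while $S_i-S_{i-1}$ is not). Your truncation fix fails structurally: truncated gains are not the gains of admissible strategies (so the primal value changes), the collection of truncated gains is not a linear space, and the annihilator of its span does not recover $\MC_\mu$ --- a measure annihilating $H\bullet S_N^{(n)}$ for every truncation level $n$ must make every \emph{truncated} process a martingale, which is strictly stronger than $S$ being a martingale, because truncation does not preserve the martingale property; so the identification of the dual feasible set breaks down. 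Your extension fix fails for a different reason: Theorem~\ref{thm:Rockafellar1}, and hence Corollary~\ref{cor:MetaDuality}, is proved specifically for the duality $\langle C_b(\Omega),\ca(\Omega)\rangle$ (via Lemma~\ref{lem:DualOfCb}, Prokhorov's theorem, and the Lusin--Tietze approximation), and no analogue on a space of functions of linear growth is available in the paper; building one would be a substantial piece of new theory, not a footnote. (There is also a sign slip in your conjugate: with $\varphi_\Psi(\omega,t)=-U(t+x-\Psi(\omega))$ one gets $\varphi_\Psi^*(\omega,y)=V(-y)+y(\Psi(\omega)-x)$, though your final displayed identity for $J_{\varphi_\Psi^*,\PC}(-\lambda Q)$ is the correct one.)

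The paper's proof is organised precisely to avoid this obstacle, and this is the idea your proposal is missing: for each \emph{fixed} $H\in\HC$ the unbounded term is absorbed into the integrand, $\varphi_{H,\Psi}(\omega,x)=-U(-x+H\bullet S_N(\omega)-\Psi(\omega))$, which satisfies Assumption~\ref{as:Integrand} by Proposition~\ref{prop:GoodIntegrand} because $B_{H,\Psi}=-H\bullet S_N+\Psi$ is usc with linear growth and (\ref{eq:DiscDualitySInteg1}) supplies exactly the required integrability. Corollary~\ref{cor:MetaDuality} is then applied only over the static space $\DC=\{\Gamma_f: f\in C_b(\RB)^N\}\subset C_b(\RB^N)$, whose polar encodes the marginal constraints alone. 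The remaining supremum over $H$ is exchanged with the dual minimum by the lop-sided minimax theorem --- legitimate because the Mackey continuity from Theorem~\ref{thm:Rockafellar1} makes the sublevel sets of $I^*_{\varphi_{H,\Psi},\PC}$ restricted to $\DC^\medcirc$ weakly compact, and $H\mapsto I^*_{\varphi_{H,\Psi},\PC}(\nu)$ is concave --- after which linearity of $\HC$ forces $\sup_{H\in\HC}\nu(H\bullet S_N)\in\{0,\infty\}$ and, via (\ref{eq:MGMeasDiscrete1}), produces the martingale constraint; only at this stage do the marginal and martingale conditions combine into $\MC_\mu$. Your remaining steps (positivity of the dual variable from $\dom(V)\subset\RB^+$, exclusion of $\lambda=0$) are correct in outline but only asserted; the latter requires the explicit argument the paper gives using $V'(0)=-\infty$ applied to $G(\alpha)=V(\alpha\eta)-\alpha\eta\Psi$.
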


The duality easily gives us representations of associated robust
utility indifference prices as risk measures. Note that in view of
(\ref{eq:UtilFinite1}),
\begin{align*}
  \gamma_{V,\PC}(Q):=\inf_{\lambda>0}
  \frac1\lambda
  \left(J_{V,\PC}(\lambda Q)-u_0(0)\right),\quad Q\in \mathrm{Prob}\left(\RB^N\right),
\end{align*}
defines a positive proper convex function with
$\inf_{Q\in\mathrm{Prob}(\RB^N)}\gamma_{V,\PC}(Q)=0$.
\begin{corollary}[Indifference prices]
  \label{cor:IndiffPrice1}
  Under the assumptions of Theorem~\ref{thm:DiscreteDuality},
  \begin{enumerate}
  \item For any upper semicontinuous $\Psi:\RB^N\rightarrow\RB$ with
    linear growth,
    \begin{equation}
      \label{eq:IndiffSell}
      p_U^\mathrm{sell}(\Psi):=\inf\{x: u_\Psi(x)\geq u_0(0)\}
      =\sup_{Q\in\MC_\mu}\left(\EB_Q[\Psi]-\gamma_{V,\PC}(Q)\right).
    \end{equation}
    
  \item For any lower semicontinuous $\Psi:\RB^N\rightarrow\RB$ with linear growth,
    \begin{equation}
      \label{eq:IndiffBuy}
      p_U^\mathrm{buy}(\Psi)
      :=-p^{\mathrm{sell}}_U(-\Psi)
      =\inf_{Q\in\MC_\mu}\left(\EB_Q[\Psi]+\gamma_{V,\PC}(Q)\right).
    \end{equation}
  \item In particular, for any continuous $\Psi:\RB^N\rightarrow\RB$
    with linear growth,
    \begin{equation}
      \label{eq:IndiffSandwich}
      \inf_{Q\in\MC_\mu}\EB_Q[\Psi]
      \leq p^\mathrm{buy}_U(\Psi)\leq p^\mathrm{sell}_U(\Psi)
      \leq \sup_{Q\in\MC_\mu}\EB_Q[\Psi].
    \end{equation}

  \end{enumerate}
\end{corollary}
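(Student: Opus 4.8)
The plan is to read off everything from the duality (\ref{eq:DiscDuality1}) of Theorem~\ref{thm:DiscreteDuality}, treating the first two assertions as Legendre-transform bookkeeping and isolating the middle inequality of (\ref{eq:IndiffSandwich}) as the one point requiring an extra structural input. For (\ref{eq:IndiffSell}), I would first note that (\ref{eq:DiscDuality1}) exhibits $x\mapsto u_\Psi(x)$ as an infimum of affine nondecreasing functions of $x$, hence as a finite (by (\ref{eq:UtilFinite1}) and the linear growth of $\Psi$, which makes $\sup_{Q\in\MC_\mu}\EB_Q[\Psi]<\infty$), concave, nondecreasing function. Since $u_\Psi(x)$ is the \emph{minimum} over $(\lambda,Q)$ of the terms $J_{V,\PC}(\lambda Q)-\lambda\EB_Q[\Psi]+\lambda x$, the condition $u_\Psi(x)\geq u_0(0)$ is equivalent to every such term being $\geq u_0(0)$, i.e.
\begin{equation*}
  x\geq \EB_Q[\Psi]-\tfrac1\lambda\bigl(J_{V,\PC}(\lambda Q)-u_0(0)\bigr),\quad\forall\,\lambda>0,\,Q\in\MC_\mu.
\end{equation*}
Taking the supremum over $(\lambda,Q)$ and performing the inner supremum over $\lambda>0$, which by the definition of $\gamma_{V,\PC}$ equals $-\gamma_{V,\PC}(Q)$, identifies the feasible set $\{x:u_\Psi(x)\geq u_0(0)\}$ as the closed half-line $[\,\sup_{Q\in\MC_\mu}(\EB_Q[\Psi]-\gamma_{V,\PC}(Q)),\infty)$, whose left endpoint is exactly $p_U^\mathrm{sell}(\Psi)$; this is (\ref{eq:IndiffSell}). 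The only routine checks are that this endpoint is finite (nonempty, proper half-line), which follows from (\ref{eq:UtilFinite1}) and linear growth.

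The second assertion (\ref{eq:IndiffBuy}) is immediate: if $\Psi$ is lsc then $-\Psi$ is usc, so (\ref{eq:IndiffSell}) applies to $-\Psi$, and combining $p_U^\mathrm{buy}(\Psi)=-p_U^\mathrm{sell}(-\Psi)$ with $\sup_{Q}(-\EB_Q[\Psi]-\gamma_{V,\PC}(Q))=-\inf_{Q}(\EB_Q[\Psi]+\gamma_{V,\PC}(Q))$ yields the claim.

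For (\ref{eq:IndiffSandwich}), a continuous $\Psi$ is both usc and lsc, so both representations are available. The two outer inequalities are trivial consequences of $\gamma_{V,\PC}\geq 0$: discarding $-\gamma_{V,\PC}(Q)\leq 0$ inside the supremum gives $p_U^\mathrm{sell}(\Psi)\leq\sup_{Q}\EB_Q[\Psi]$, and discarding $+\gamma_{V,\PC}(Q)\geq 0$ inside the infimum gives $p_U^\mathrm{buy}(\Psi)\geq\inf_{Q}\EB_Q[\Psi]$. The middle inequality $p_U^\mathrm{buy}(\Psi)\leq p_U^\mathrm{sell}(\Psi)$ is the step I expect to be the main obstacle, because it does \emph{not} follow from the representations alone: that would require $\inf_{Q\in\MC_\mu}\gamma_{V,\PC}(Q)=0$, whereas we only know the infimum over all of $\mathrm{Prob}(\RB^N)$ vanishes. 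Instead I would prove it directly from the value function, exploiting that $(x,\Psi)\mapsto u_\Psi(x)$ is \emph{jointly} concave: the argument $x+H\bullet S_N+\Gamma_f-\Psi$ of $U$ is affine in $(x,\Psi,H,f)$, so for each fixed $P$ the integrand is jointly concave, the infimum over $P\in\PC$ preserves concavity, and the partial supremum over $(H,f)\in\HC\times C_b(\RB)^N$ of a jointly concave function is concave in $(x,\Psi)$. Writing $a=p_U^\mathrm{sell}(\Psi)$ and $b=p_U^\mathrm{sell}(-\Psi)$, which are finite and satisfy $u_\Psi(a)\geq u_0(0)$, $u_{-\Psi}(b)\geq u_0(0)$ (the feasible sets being closed by (\ref{eq:IndiffSell})), joint concavity at the midpoint gives
\begin{equation*}
  u_0\Bigl(\tfrac{a+b}2\Bigr)=u_{\frac12\Psi+\frac12(-\Psi)}\Bigl(\tfrac{a+b}2\Bigr)\geq\tfrac12 u_\Psi(a)+\tfrac12 u_{-\Psi}(b)\geq u_0(0),
\end{equation*}
and the strict monotonicity of $u_0$ (inherited from that of $U$) forces $a+b\geq 0$, i.e. $p_U^\mathrm{buy}(\Psi)=-b\leq a=p_U^\mathrm{sell}(\Psi)$. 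This recovers the general inequality already announced in the introduction, and completes (\ref{eq:IndiffSandwich}).
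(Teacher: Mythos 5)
Your parts (1) and (2) are correct and coincide with the paper's proof: (1) is exactly the rearrangement of (\ref{eq:DiscDuality1}) into $x\geq \EB_Q[\Psi]-\frac1\lambda(J_{V,\PC}(\lambda Q)-u_0(0))$ for all $\lambda>0$, $Q\in\MC_\mu$, followed by the inner supremum over $\lambda$, and (2) is the application of (1) to $-\Psi$. The two outer inequalities of (\ref{eq:IndiffSandwich}) are also handled as the paper does, via $\gamma_{V,\PC}\geq 0$ on $\MC_\mu$ (which itself follows from (\ref{eq:DiscDuality1}) with $\Psi=0$, $x=0$).

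For the middle inequality, however, your premise is mistaken and your substitute argument has a genuine gap. The premise: you claim $\inf_{Q\in\MC_\mu}\gamma_{V,\PC}(Q)=0$ is not available. It is: Theorem~\ref{thm:DiscreteDuality} asserts the dual infimum is a \emph{minimum}, so applying (\ref{eq:DiscDuality1}) with $\Psi=0$, $x=0$ (admissible, as $0$ is continuous with linear growth) yields $\lambda_0>0$ and $Q_0\in\MC_\mu$ with $J_{V,\PC}(\lambda_0 Q_0)=u_0(0)$, hence $\gamma_{V,\PC}(Q_0)\leq\frac1{\lambda_0}\left(J_{V,\PC}(\lambda_0 Q_0)-u_0(0)\right)=0$. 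Then directly $p_U^\mathrm{buy}(\Psi)\leq\EB_{Q_0}[\Psi]+\gamma_{V,\PC}(Q_0)\leq\EB_{Q_0}[\Psi]-\gamma_{V,\PC}(Q_0)\leq p_U^\mathrm{sell}(\Psi)$; this is what the paper means by ``(3) is a combination of (1) and (2).'' The gap in your detour: the strict monotonicity of $u_0$ is asserted as ``inherited from that of $U$,'' but strictness does not survive the inf over $P\in\PC$ followed by the sup over strategies (an infimum or supremum of strictly increasing functions is in general only nondecreasing; e.g. $\sup_n\arctan(x+n)$ is constant). Worse, the implication you actually use, namely $u_0(z)\geq u_0(0)\Rightarrow z\geq 0$, is by your own formula (\ref{eq:IndiffSell}) applied to $\Psi=0$ \emph{equivalent} to $p_U^\mathrm{sell}(0)=-\inf_{Q\in\MC_\mu}\gamma_{V,\PC}(Q)\geq 0$, i.e. to precisely the fact you declared unavailable; so as written the concavity argument is circular. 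It can be repaired by proving strict monotonicity of $u_0$ from dual attainment (if $(\lambda_x,Q_x)$ attains the minimum at $x$, then $u_0(x')\leq J_{V,\PC}(\lambda_x Q_x)+\lambda_x x'<u_0(x)$ for $x'<x$), but once dual attainment is invoked, the one-line argument above makes the whole detour unnecessary.
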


\begin{proof}
  (2) follows from (1), and (3) is a combination of (1) and (2).  The
  derivation of (1) from (\ref{eq:DiscDuality1}) is also standard: by
  (\ref{eq:DiscDuality1}), $u_\Psi(x)\geq u_0(0)$ iff for any
  $\lambda>0$ and $Q\in\MC_\mu$,
  $J_{V,\PC}(\lambda Q)-\lambda\EB_Q[\Psi]+\lambda x\geq u_0(0)$; then
  rearrange the terms and take the infimum over $\lambda>0$ and
  $Q\in\MC_\mu$.
\end{proof}

The estimate (\ref{eq:IndiffSandwich}) says that the indifference
prices lie in the \emph{\bfseries model-free pricing bound} in the
sense of \citep{MR3066985}:
\begin{equation}
  \label{eq:Bounds1}
  [p_U^\mathrm{buy}(\Psi),p_U^\mathrm{sell}(\Psi)]\subset
  \Bigl[\inf_{Q\in\MC_\mu}\EB_Q[\Psi],\sup_{Q\in\MC_\mu}\EB_Q[\Psi]\Bigr],
\end{equation}
for any continuous $\Psi:\RB^N\rightarrow \RB$ with linear growth
(then $\Psi\in\mcap_{Q\in\MC_\mu}L_1(Q)$).  In particular, the
indifference prices are consistent with the observed vanilla prices,
i.e.  if $\Psi(\omega)=g(\omega_i)=g(S_i(\omega))$ (i.e. a vanilla option with maturity $i$),
then
\begin{align*}
  p^{\mathrm{sell}}_U(\Psi)=p^\mathrm{buy}_U(\Psi)=\mu_i(g).
\end{align*}

\begin{remark}[A trivial case]
  \label{rem:Trivial}
  In the situation of the paper (with full marginal constraint),
  $\MC_\mu$ itself is (convex and)
  $\sigma(\ca(\RB^N),C_b(\RB^N))$-compact. If we take $\PC=\MC_\mu$,
  then $J_{V,\MC_\mu}(Q)\leq J_V(Q|Q)=V(1)<\infty$
  ($\forall Q\in\MC_\mu$), and $\gamma_{V,\MC_\mu}(Q)=0$ on $\MC_\mu$;
  thus in this case, the buyer's/seller's indifference prices
  coincide, respectively, with sub/super-hedging prices, i.e. the two
  intervals in (\ref{eq:Bounds1}) coincide. The choice of a ``nice''
  $\PC$ as well as a quantitative analysis are left for future topics.
\end{remark}

\begin{example}[Exponential case; cf. \citep{MR3910012}, \citep{guillaume:pastel-01002103}]
  \label{ex:Exponential}
  As one might expect, the situation is much simpler if the utility
  function is exponential, i.e.
  \begin{align*}
    U(x)=-e^{-x},\quad x\in\RB.
  \end{align*}
  Letting $\EC(Q|P)=\EB_P\bigl[\frac{dQ}{dP}\log\frac{dQ}{dP}\bigr]$
  if $Q\ll P$ and otherwise $+\infty$ (the relative entropy) and
  $\EC_\PC(Q)=\inf_{P\in\PC}\EC(Q|P)$, a straightforward calculation
  shows that
  \begin{align*}
    J_{V}(\lambda Q|P)
    =\lambda\EC(Q|P)+\lambda\log\lambda-\lambda.
  \end{align*}
  Thus for any continuous $\Psi:\RB^N\rightarrow\RB$ with linear
  growth and a $\sigma(\ca(\RB^N),C_b(\RB^N))$-compact convex set
  $\PC$ with $\inf_{Q\in\MC_\mu}\EC_\PC(Q)<\infty$ and
    \begin{equation}
    \label{eq:PAs2}
    \lim_k\sup_{P\in\PC}\EB_P[\exp(\alpha|S_i|)\ind_{\{|S_i|>k\}}]=0,\,\forall \alpha>0,\, i\leq N,
  \end{equation}
  one has
  \begin{align*}
    \sup_{H\in\HC,f\in C_b(\RB)^N}\inf_{P\in\PC}\EB_P\left[-e^{-(x+H\bullet S_N+\Gamma_f-\Psi)}\right]
    =-e^{-x- \min_{Q\in\MC_\mu}
    \left(\EC_\PC(Q)-\EB_Q[\Psi]\right)}.
  \end{align*}
  In particular,
  \begin{align*}
    p_{\exp}^{\mathrm{sell}}(\Psi)
    &=\max_{Q\in\MC_\mu}
      \Bigl\{\EB_Q[\Psi]
      -\Bigl(\EC_\PC(Q)
      -\inf_{Q'\in \MC_\mu}\EC(Q')\Bigr)\Bigr\}.
    \end{align*}
\end{example}

\subsection{Ramifications}
\label{sec:Ramifi}

From the financial motivation, it is important to note that the
duality (\ref{eq:DiscDuality1}) is somehow stable for the choice of
the admissible sets. In Theorem~\ref{thm:DiscreteDuality}, we chose
$C_b(\RB)^N$ for static positions and $\HC$ (given by (\ref{eq:SetH}))
for the dynamic ones. We first examine the largest choice. Let $\HC_s$
be the set of predictable processes $H$ (for the filtration generated
by $S$) such that $H\bullet S$ is a supermartingale under all
$Q\in\MC_\mu$, and consider $\prod_{i\leq N}L_1(\mu_i)$ for static
positions. Then for any
$f=(f_i)_{i\leq N}\in \prod_{i\leq N}L_1(\mu_i)$, $H\in\HC_s$ and
$Q\in\MC_\mu$,
\begin{align*}
  \EB_P[U(x+H\bullet S_N+\Gamma_f-\Psi)]
  &  \leq J_V(\lambda Q|P)+\lambda \EB_Q[x+H\bullet S_N+\Gamma_f-\Psi]\\
  \leq J_V(\lambda Q|P)+\lambda \EB_Q[x-\Psi],
\end{align*}
where
$\EB_Q[\Gamma_{f}] =\sum_{i\leq
  N}\left\{\EB_Q[f_i(S_i)]-\mu_i(f_i)\right\}=0$ if
$f\in \prod_{i\leq N}L_1(\mu_i)$; hence
\begin{align*}
  \sup_{H\in\HC_s,f\in\prod_{i\leq N} L_1(\mu_i)}
  & \inf_{P\in\PC}\EB_P[U(x+H\bullet S_N+\Gamma_f-\Psi)]\\
  &\leq \inf_{\lambda>0,Q\in\MC_\mu}
    \left(J_{V,\PC}(\lambda Q)+\lambda x-\lambda\EB_Q[\Psi]\right)
    =u_\Psi(x).
\end{align*}
Since $C_b(\RB)\subset L_1(\mu_i)$ and $\HC\subset\HC_s$, we deduce
that
\begin{corollary}
  \label{cor:DualityLargerAdm}
  Under the assumptions of Theorem~\ref{thm:DiscreteDuality}, it holds
  that
  \begin{align*}
    \sup_{H\in\HC_s,f\in\prod_{i\leq N}L_1(\mu_i)}  \inf_{P\in\PC}
    &\EB_P[U(x+H\bullet S_N+\Gamma_f-\Psi)]\\
    &=\min_{\lambda>0,Q\in\MC_\mu}
      \left(J_{V,\PC}(\lambda Q)+\lambda x-\lambda\EB_Q[\Psi]\right).
\end{align*}
\end{corollary}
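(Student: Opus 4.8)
The plan is to realise the identity as a sandwich, using nothing beyond the pointwise Fenchel inequality for the conjugate pair $(U,V)$ and the duality already established in Theorem~\ref{thm:DiscreteDuality}; indeed the bulk of the work is the displayed estimate immediately preceding the statement. The left-hand value is a supremum over strictly larger admissible classes than in (\ref{eq:ValueFunctDiscrete1}), so one inequality is a triviality of monotonicity and the other is the displayed bound.

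For ``$\geq$'' I would record two inclusions. Each bounded continuous $f_i$ is $\mu_i$-integrable, since $\mu_i\in\mathrm{Prob}(\RB)$, so $C_b(\RB)\subset L_1(\mu_i)$; and for $H\in\HC$ the process $H\bullet S$ is a genuine $Q$-martingale under every $Q\in\MC_\mu$ (a bounded predictable integrand against the $Q$-martingale $S$), hence a fortiori a supermartingale, so $\HC\subset\HC_s$. Consequently the supremum over $(\HC_s,\prod_{i\leq N}L_1(\mu_i))$ dominates the one over $(\HC,C_b(\RB)^N)$, and the latter is $u_\Psi(x)=\min_{\lambda>0,Q\in\MC_\mu}(J_{V,\PC}(\lambda Q)+\lambda x-\lambda\EB_Q[\Psi])$ by Theorem~\ref{thm:DiscreteDuality}.

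For ``$\leq$'' I would fix $H\in\HC_s$, $f\in\prod_{i\leq N}L_1(\mu_i)$, $P\in\PC$, $\lambda>0$ and $Q\in\MC_\mu$, and apply $U(z)\leq V(y)+zy$ pointwise with $y=\lambda\,dQ/dP$ and $z=x+H\bullet S_N+\Gamma_f-\Psi$. Integrating against $P$ produces $J_V(\lambda Q|P)$ from the first term (the estimate being vacuous unless $Q\ll P$) and $\lambda\EB_Q[x+H\bullet S_N+\Gamma_f-\Psi]$ from the second. The two cancellations that drive the proof are $\EB_Q[\Gamma_f]=\sum_{i\leq N}(\EB_Q[f_i(S_i)]-\mu_i(f_i))=0$, which holds because $Q\circ S_i^{-1}=\mu_i$ and $f_i\in L_1(\mu_i)$, and $\EB_Q[H\bullet S_N]\leq 0$, which is exactly the supermartingale property defining $\HC_s$ (together with $H\bullet S_0=0$). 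These leave $J_V(\lambda Q|P)+\lambda x-\lambda\EB_Q[\Psi]$ on the right. Taking $\inf_{P\in\PC}$ (and using $J_{V,\PC}(\lambda Q)=\inf_{P\in\PC}J_V(\lambda Q|P)$), then $\sup$ over $(H,f)$ on the left and $\inf$ over $(\lambda,Q)$ on the right, gives ``$\leq u_\Psi(x)$''.

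No genuine obstacle is expected: the corollary is the preceding display together with the two inclusions. The only care needed is integrability, so that the passage from $\EB_P[\tfrac{dQ}{dP}g]$ to $\EB_Q[g]$ and the cancellations are not of indeterminate form; this is covered by $H\bullet S_N\in L_1(Q)$ (supermartingale property), $\Gamma_f\in L_1(Q)$ (as $f_i\in L_1(\mu_i)$), and $\Psi\in L_1(Q)$ (linear growth together with $S_i\in L_1(Q)$ for $Q\in\MC_\mu$). Finally, attainment of the outer extremum on the right (a $\min$, not merely an $\inf$) is inherited directly from Theorem~\ref{thm:DiscreteDuality}.
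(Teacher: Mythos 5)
Your proposal is correct and follows essentially the same route as the paper: the Fenchel inequality $U(z)\leq V(\lambda\,dQ/dP)+\lambda z\,dQ/dP$ combined with the two cancellations $\EB_Q[\Gamma_f]=0$ and $\EB_Q[H\bullet S_N]\leq 0$ gives the ``$\leq$'' bound, while the inclusions $C_b(\RB)\subset L_1(\mu_i)$ and $\HC\subset\HC_s$ together with Theorem~\ref{thm:DiscreteDuality} give ``$\geq$'' and the attainment of the minimum. Your explicit attention to the integrability of $H\bullet S_N$, $\Gamma_f$ and $\Psi$ under $Q$ is a point the paper leaves implicit, but it does not change the argument.
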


A bit more general formulation is to choose, for each $i\leq N$, a
subset $\SC_i\subset L_1(\mu_i)$ for static positions maturing at $i$.
For instance, \citep{MR3910012} considered the case where each $\SC_i$
is spanned by a finite number (possibly $0$) of fixed options. Here we
consider the one spanned by call options of all the strikes:
\begin{align*}
  \SC_\mathrm{call}:=\Span\left((\cdot-K)^+: K\in\RB\right)\subset L_1(\mu_i).
\end{align*}
Note that every element of $\SC_\mathrm{call}$ is piecewise linear,
while any bounded piecewise linear function lies in
$\SC_\mathrm{call}+\RB=\{g+a:g\in\SC_\mathrm{call}, a\in\RB\}$.
\begin{corollary}[Duality with calls only]
  \label{cor:CallDuality}
  Under the assumptions of Theorem~\ref{thm:DiscreteDuality},
  \begin{equation}
    \label{eq:CallDuality}
    \begin{split}
      \sup_{H\in\HC,f\in\SC_\mathrm{call}^N}
      &\inf_{P\in\PC}\EB_P[U(x+H\bullet S_N+\Gamma_f-\Psi)]\\
      &=\min_{\lambda>0,Q\in\MC_\mu}\left(J_{V,\PC}(\lambda Q)+\lambda x-\lambda \EB_Q[\Psi]\right).
    \end{split}
  \end{equation}
\end{corollary}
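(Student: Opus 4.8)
The plan is to sandwich the left-hand side of \eqref{eq:CallDuality}, call it $V_{\mathrm{call}}(x)$, between the dual minimum and itself, using Theorem~\ref{thm:DiscreteDuality} for the easy inequality and a primal approximation for the hard one.

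For the upper bound I would invoke the computation behind Corollary~\ref{cor:DualityLargerAdm}: since $\SC_\mathrm{call}\subset L_1(\mu_i)$ and $\HC\subset\HC_s$, every admissible pair for the call problem is admissible for the largest problem. Concretely, for $f\in\SC_\mathrm{call}^N$, $H\in\HC$, $Q\in\MC_\mu$ and $\lambda>0$ one has $\EB_Q[H\bullet S_N]=0$ and, because $f_i\in L_1(\mu_i)$ while $Q\circ S_i^{-1}=\mu_i$, also $\EB_Q[\Gamma_f]=0$; hence $\EB_P[U(x+H\bullet S_N+\Gamma_f-\Psi)]\le J_V(\lambda Q|P)+\lambda(x-\EB_Q[\Psi])$, and passing to $\inf_P$, $\sup_{H,f}$, $\inf_{\lambda,Q}$ gives $V_{\mathrm{call}}(x)\le\min_{\lambda>0,Q\in\MC_\mu}\bigl(J_{V,\PC}(\lambda Q)+\lambda x-\lambda\EB_Q[\Psi]\bigr)=u_\Psi(x)$ by Theorem~\ref{thm:DiscreteDuality}.

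The substance is the reverse inequality $V_{\mathrm{call}}(x)\ge u_\Psi(x)$, obtained by approximating $C_b$-optimal static positions by calls. Fix $\varepsilon>0$ and near-optimal $(H^*,f^*)\in\HC\times C_b(\RB)^N$ for $u_\Psi(x)$. For each $i$ and $n$, let $\tilde f_i^{(n)}$ be a bounded piecewise-linear function agreeing with $f_i^*$ to within $1/n$ on $[-n,n]$ and constant outside, with $\|\tilde f_i^{(n)}\|_\infty\le\|f_i^*\|_\infty+1$; then $\tilde f_i^{(n)}\to f_i^*$ pointwise and boundedly. Since every bounded piecewise-linear function lies in $\SC_\mathrm{call}+\RB$, and an additive constant leaves the gain $f_i(S_i)-\mu_i(f_i)$ unchanged, the profile $\tilde f^{(n)}$ yields exactly the gain $\Gamma_{\tilde f^{(n)}}$ of an element of $\SC_\mathrm{call}^N$, so $(H^*,\tilde f^{(n)})$ is admissible for the call problem. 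Writing $Y:=x+H^*\bullet S_N-\Psi$ (of linear growth), $X_n:=Y+\Gamma_{\tilde f^{(n)}}$ and $X_*:=Y+\Gamma_{f^*}$, the gains $\Gamma_{\tilde f^{(n)}},\Gamma_{f^*}$ are uniformly bounded by some $C$, whence $X_n,X_*\in[Y-C,Y+C]$ and $X_n\to X_*$ pointwise.

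It then remains to pass to the limit uniformly in $P$, i.e. to show $\sup_{P\in\PC}\EB_P|U(X_n)-U(X_*)|\to0$; this is the crux, since $U$ is merely concave and $\PC$ is non-dominated. I would split at $\{\max_i|S_i|\le m\}$ and its complement. On the complement the integrand is dominated by $U(X_n)^-+U(X_*)^-+U(X_n)^++U(X_*)^+$: as $U$ is increasing, the negative parts are dominated by $U(Y-C)^-$, uniformly integrable over $\PC$ by \eqref{eq:DiscDualitySInteg1}, while the positive parts satisfy $U(X)^+\le\bigl(U(0)+U'(0)(Y+C)\bigr)^+$ and are controlled via $\sup_P\EB_P|S_i|<\infty$ (itself forced by \eqref{eq:DiscDualitySInteg1} and the Inada condition \eqref{eq:Inada}, which make $U(Y-C)^-$ superlinear); hence, by uniform integrability and the uniform tightness of $\PC$ (Lemma~\ref{lem:Prokhorov}), this part is below $\delta$ uniformly in $n,P$ once $m$ is large. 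On $\{\max_i|S_i|\le m\}$, $Y$ is bounded, so $U'\le U'(-c_m)$ there and $|U(X_n)-U(X_*)|\le U'(-c_m)\,|\Gamma_{\tilde f^{(n)}}-\Gamma_{f^*}|$, which tends to $0$ uniformly in $P$ as $n\to\infty$ because $\tilde f_i^{(n)}\to f_i^*$ uniformly on $[-m,m]$ and $\mu_i(\tilde f_i^{(n)})\to\mu_i(f_i^*)$. Fixing $m$ then $n$ large yields the uniform convergence, so $V_{\mathrm{call}}(x)\ge\inf_P\EB_P[U(X_n)]\to\inf_P\EB_P[U(X_*)]>u_\Psi(x)-\varepsilon$; letting $\varepsilon\downarrow0$ and combining with the upper bound gives \eqref{eq:CallDuality}.
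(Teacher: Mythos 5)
Your proof is correct, and its skeleton coincides with the paper's: the inequality ``$\leq$'' comes from the computation of Corollary~\ref{cor:DualityLargerAdm} (every $g\in\SC_\mathrm{call}$ has linear growth, hence lies in $L_1(\mu_i)$, so $\EB_Q[\Gamma_f]=0$ for $Q\in\MC_\mu$), and the inequality ``$\geq$'' comes from approximating a near-optimal $C_b$-position by bounded piecewise-linear functions, which lie in $\SC_\mathrm{call}+\RB$ and whose constant parts do not affect $\Gamma_f$. Where you genuinely depart from the paper is the limit passage. The paper's Lemma~\ref{lem:MackeyApprox} shows that your approximants $\Gamma_{\tilde f^{(n)}}$ converge to $\Gamma_{f^*}$ in the Mackey topology $\tau(C_b(\RB^N),\ca(\RB^N))$, and the corollary then follows in three lines because $f\mapsto -I_{\varphi_{H,\Psi},\PC}(-f)$ is $\tau(C_b(\RB^N),\ca(\RB^N))$-continuous by Theorem~\ref{thm:Rockafellar1} through Lemma~\ref{lem:FinInteg1}. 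You bypass that machinery and verify the convergence of values directly, proving $\sup_{P\in\PC}\EB_P|U(X_n)-U(X_*)|\to 0$ by splitting at $\{\max_i|S_i|\leq m\}$, with uniform integrability from (\ref{eq:DiscDualitySInteg1}) and the Inada condition plus tightness of $\PC$ for the tail, and the local Lipschitz property of $U$ on the bounded region. Your route is more elementary and self-contained for this step (the Mackey-continuity theorem is still needed, but only inside Theorem~\ref{thm:DiscreteDuality}, which both proofs invoke for the dual representation and its attainment); the paper's route is shorter and applies to any Mackey-convergent approximating family, not just this particular one. One point to tighten: for the positive-part tails you appeal to $\sup_{P\in\PC}\EB_P|S_i|<\infty$, but $L_1$-boundedness alone does not control $\sup_{P\in\PC}\EB_P\bigl[|S_i|\ind_{\{\max_j|S_j|>m\}}\bigr]$ uniformly as $m\to\infty$; what you need is uniform integrability, $\lim_k\sup_{P\in\PC}\EB_P\bigl[|S_i|\ind_{\{|S_i|>k\}}\bigr]=0$, i.e. $|S_i|\in\LB_{1,b}(\PC)$. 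The superlinearity argument you sketch does deliver exactly this (Inada forces $y\mapsto U(-y)^-$ to grow superlinearly while (\ref{eq:DiscDualitySInteg1}) bounds its tails uniformly over $\PC$), and then Lemma~\ref{lem:UIL1B} together with tightness (Lemma~\ref{lem:Prokhorov}) finishes the tail estimate; so the issue is one of wording, not of substance.
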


\begin{proof}
  Will be given in Section~\ref{sec:ProofUtil}.
\end{proof}

\begin{remark}[Finitely many options]
  \label{rem:FinMany}
  Another possible (and rather more realistic) formulation is to set
  each $\SC_i$ to be the span of a finite number (possibly $0$) of
  fixed options, say $\SC_i=\Span(f_{i,1},...,f_{i,m_i})$ as
  considered in \citep{MR3910012} and \citep{MR4166747} (and
  \citep{MR3313756} in super-hedging). In this case, the duality
  (\ref{eq:CallDuality}) with \emph{\bfseries exact marginal
    constraint} is no longer true. But a similar duality with
  martingale measures $Q$ with constraints
  $\EB_Q[f_{i,k}(S_i)]=\mu_i(f_{i,k})$ holds; see \citep{MR3910012},
  Th.~2.2.
\end{remark}

\subsection{Proofs of Theorem~\ref{thm:DiscreteDuality} and
  Corollary~\ref{cor:CallDuality}}
\label{sec:ProofUtil}

We shall apply the results of Section~\ref{sec:RobInteg} to the normal
integrands
\begin{align*}
  \varphi_{H,\Psi}(\omega,x)=-U(-x+H\bullet S_N(\omega)-\Psi(\omega)),\quad H\in\HC.
\end{align*}
Note first that for each $H\in\HC$,
$\omega\mapsto H\bullet S_N(\omega)$ is continuous with linear growth,
i.e. $|H\bullet S_N(\omega)|\leq c(1+\sum_{i\leq N}|\omega_i|)$ for
some $c>0$ (say $c=2(|S_0|\vee 1)\max_{i\leq N}\|H_i\|_\infty$).  Thus
under the assumptions of Theorem~\ref{thm:DiscreteDuality},
$\omega\mapsto B_{H,\Psi}(\omega):=-H\bullet S_N(\omega)+\Psi(\omega)$
is usc with linear growth since $\Psi$ is. If $c>0$ is a linear growth
constant for $B_{H,\Psi}$, then letting $\varphi_U(x)=-U(-x)$, which
is convex,
\begin{align*}
  \varphi_U(\pm\alpha|B_{H,\Psi}|)
  &  \leq \varphi_U\Bigl(\alpha c\Bigl(1+\sum\nolimits_{i\leq N}|S_i|\Bigr)\Bigr)\\
  &\leq \frac{\varphi_U(2\alpha c)^+}{2}+\sum\nolimits_{i\leq N}\frac{\varphi_U(2\alpha cN|S_i|)^+}{2N}
    \stackrel{\text{(\ref{eq:DiscDualitySInteg1})}}\in\LB_{1,b}(\PC),
\end{align*}
for any $\alpha>0$.  Hence Proposition~\ref{prop:GoodIntegrand}
applied to $\varphi_U$ and $B_{H,\Psi}$ yields that
$\varphi_{H,\Psi}(\cdot, x)=(\varphi_U)_{B_H}(\cdot,x)$ satisfies
Assumption~\ref{as:Integrand}, and we have

\begin{lemma}
  \label{lem:FinInteg1}
  Under the assumptions of Theorem~\ref{thm:DiscreteDuality}, it holds
  that for any $H\in\HC$,
  \begin{align*}
    I_{\varphi_{H,\Psi},\PC}(f)=-\inf_{P\in\PC}\EB_P\left[U(-f+H\bullet
      S_N-\Psi)\right]
  \end{align*}
  is continuous on $C_b(\RB^N)$ for $\tau(C_b(\RB^N),\ca(\RB^N))$;
  $-H\bullet S_N+\Psi\in L_1(\nu)$ whenever
  $\inf_{\lambda>0}J_{V,\PC}(\lambda\nu)<\infty$; and the conjugate of
  $I_{\varphi_{H,\Psi},\PC}$ is given on $\ca(\RB^N)$ as
        \begin{equation}
    \label{eq:UtilIntegConj1}
    I_{\varphi_{H,\Psi},\PC}^*(\nu)=
    \ccases{
      J_{V,\PC}(\nu)+\nu(H\bullet S_N-\Psi)&\text{if }\inf_{\alpha>0}J_{V,\PC}(\alpha\nu)<\infty,\\
      +\infty &\text{otherwise.}}
  \end{equation}
  In particular,
  $\dom(I^*_{\varphi_{H,\Psi},\PC})=\dom(J_{V,\PC})\subset\ca(\RB^N)^+$
  (since $\dom(V)\subset\RB^+$), and $\Psi,S_i\in L_1(\nu)$,
  $i\leq N$, whenever $\inf_{\lambda>0}J_{V,\PC}(\lambda\nu)<\infty$.
\end{lemma}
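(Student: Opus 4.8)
The plan is to recognize Lemma~\ref{lem:FinInteg1} as a direct application of Proposition~\ref{prop:GoodIntegrand} with the deterministic convex function $\varphi_U(x)=-U(-x)$ and the usc function $B=B_{H,\Psi}=-H\bullet S_N+\Psi$, together with an unwinding of the various conjugates. The preceding discussion in the excerpt has already verified that $B_{H,\Psi}$ is usc with linear growth and that $\varphi_U(\pm\alpha|B_{H,\Psi}|)^+\in\LB_{1,b}(\PC)$ for all $\alpha>0$; in particular the hypothesis (\ref{eq:PropGoodIntegAss1}) of Proposition~\ref{prop:GoodIntegrand} is met (indeed the stronger $\LB_{1,b}$ integrability holds for \emph{both} the $+$ and $-$ perturbations, which is more than we need). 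So the first step is simply to invoke Proposition~\ref{prop:GoodIntegrand} to conclude that $\varphi_{H,\Psi}(\cdot,x)=\varphi_U(x+B_{H,\Psi})$ is a normal convex integrand satisfying Assumption~\ref{as:Integrand}, and hence by Theorem~\ref{thm:Rockafellar1} that $I_{\varphi_{H,\Psi},\PC}$ is $\tau(C_b(\RB^N),\ca(\RB^N))$-continuous.

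Next I would match up the stated integral formula. By the very definition (\ref{eq:IntegFunctDef}), $I_{\varphi_{H,\Psi},\PC}(f)=\sup_{P\in\PC}\EB_P[\varphi_{H,\Psi}(\cdot,f)]=\sup_{P\in\PC}\EB_P[-U(-f+H\bullet S_N-\Psi)]=-\inf_{P\in\PC}\EB_P[U(-f+H\bullet S_N-\Psi)]$, which is exactly the claimed expression. For the conjugate I would compute the conjugate of the base integrand: since $U$ is the concave conjugate partner of $V$, one has $\varphi_U^*(y)=\sup_x(xy+U(-x))=\sup_z(U(z)-zy)=V(y)$, so the divergence functional attached to $\varphi_U$ is precisely $J_{V,\PC}$. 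Proposition~\ref{prop:GoodIntegrand} then gives both the equivalence $J_{V,\PC}(\nu)<\infty\Leftrightarrow J_{\varphi_{H,\Psi}^*,\PC}(\nu)<\infty$ and, via (\ref{eq:GoodInetgConj1}), the formula $I_{\varphi_{H,\Psi},\PC}^*(\nu)=J_{\varphi_{H,\Psi}^*,\PC}(\nu)=J_{V,\PC}(\nu)-\nu(B_{H,\Psi})=J_{V,\PC}(\nu)+\nu(H\bullet S_N-\Psi)$ on the domain where $\inf_{\lambda>0}J_{V,\PC}(\lambda\nu)<\infty$, and $+\infty$ otherwise. This is (\ref{eq:UtilIntegConj1}).

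The integrability claims then follow from the $B\in L_1(\nu)$ assertion of Proposition~\ref{prop:GoodIntegrand}: whenever $\inf_{\lambda>0}J_{V,\PC}(\lambda\nu)<\infty$ we get $B_{H,\Psi}=-H\bullet S_N+\Psi\in L_1(\nu)$. To upgrade this to $\Psi,S_i\in L_1(\nu)$ separately, I would run the same argument with a suitable finite family of choices of the usc linear-growth functions in place of $B_{H,\Psi}$. The clean way is to apply the $B\in L_1(\nu)$ conclusion to each of the $N$ coordinate-type functions: note that $\pm S_i$ and $\pm H\bullet S_N$ are all continuous with linear growth, and the integrability hypothesis (\ref{eq:DiscDualitySInteg1}) is phrased exactly so that $\varphi_U(\pm\alpha|S_i|)^+\in\LB_{1,b}(\PC)$; thus each $S_i$ qualifies as an admissible $B$ in Proposition~\ref{prop:GoodIntegrand} and lands in $L_1(\nu)$ on the relevant domain. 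Once every $S_i\in L_1(\nu)$ and $H\bullet S_N\in L_1(\nu)$, linearity gives $\Psi=B_{H,\Psi}+H\bullet S_N\in L_1(\nu)$. Finally, $\dom(I^*_{\varphi_{H,\Psi},\PC})=\dom(J_{V,\PC})\subset\ca(\RB^N)^+$ follows because $J_{V,\PC}$ is built from $V$ via (\ref{eq:Entropy1})--(\ref{eq:RobDiv1}) and $\dom(V)\subset\RB^+$ forces any $\nu$ with finite divergence to be nonnegative.

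The main obstacle I anticipate is not conceptual but bookkeeping: one must be careful that Proposition~\ref{prop:GoodIntegrand}'s equivalence $J_{V,\PC}(\nu)<\infty\Leftrightarrow J_{\varphi_{H,\Psi}^*,\PC}(\nu)<\infty$ and the separate $L_1(\nu)$-membership conclusions are invoked on the correct domain, namely $\inf_{\lambda>0}J_{V,\PC}(\lambda\nu)<\infty$ rather than merely $J_{V,\PC}(\nu)<\infty$, because the perturbation $\nu\mapsto\nu(B_{H,\Psi})$ is only finite after this recession-cone condition is imposed; this is precisely the role played by the $\inf_{\lambda>0}$ in (\ref{eq:UtilIntegConj1}). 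Keeping the positivity part $\dom(J_{V,\PC})\subset\ca(\RB^N)^+$ aligned with the structural fact $\dom(V)\subset\RB^+$ is the only other point requiring a word, and it is immediate from the definition of $J_{\varphi^*}(\nu|P)$ through $V=\varphi_U^*$.
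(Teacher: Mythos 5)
Your proposal is correct and takes essentially the same route as the paper: the paper likewise obtains the lemma by applying Proposition~\ref{prop:GoodIntegrand} (together with Theorem~\ref{thm:Rockafellar1}) to the deterministic convex function $\varphi_U(x)=-U(-x)$, whose conjugate is $V$, and the usc linear-growth perturbation $B_{H,\Psi}=-H\bullet S_N+\Psi$, with hypothesis (\ref{eq:PropGoodIntegAss1}) verified exactly as you do via the linear-growth bound and (\ref{eq:DiscDualitySInteg1}). Your explicit re-application of the proposition with $B=S_i$ to deduce $S_i\in L_1(\nu)$ (and then $\Psi\in L_1(\nu)$ by linearity) just makes precise a step the paper leaves implicit, and is consistent with its argument.
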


Recall that $\Gamma_f=\sum_{i\leq N}(f_i(S_i)-\mu_i(f_i))$ for
$f=(f_i)_{i\leq N}\in C_b(\RB)^N$, and let
\begin{align}
  \label{eq:D}
  \DC:=\left\{\Gamma_f: f\in C_b(\RB)^N\right\}.
\end{align}
This is a vector subspace of $C_b(\RB^N)$, so its (one-sided) polar in
$\ca(\RB^N)$ is
$\DC^\medcirc=\{\nu\in\ca(\RB^N): \nu(\psi)=0, \psi\in\DC\}$ (which is
linear), and a probability measure $Q$ lies in $\DC^\medcirc$ iff
$\EB_Q[f(S_i)-\mu_i(f)]=0$ for $i\leq N$ and $f\in C_b(\RB)$ iff
$Q\circ S_i^{-1}=\mu_i$, $i\leq N$.  In other words,
\begin{equation}
  \label{eq:ProofUtilMarginal1}
  \DC^\medcirc\cap\ca(\RB^N)^+
  =\left\{\lambda Q:\lambda\geq 0, Q\in\mathrm{Prob}(\RB^N), Q\circ S_i^{-1}=\mu_i, i\leq N\right\}.
\end{equation}
In particular, by (\ref{eq:UtilFinite1}) and
$\dom(J_{V,\PC})\subset\ca(\RB^N)$ (Lemma~\ref{lem:FinInteg1}),
\begin{equation}
  \label{eq:ProofUtilNonEmpty1}
  \emptyset\neq
  \MC_\mu\cap\dom(J_{V,\PC})\subset\DC^\medcirc\cap\dom(J_{V,\PC})\subset\ca(\RB^N)^+,
\end{equation}
Note also that in view of (\ref{eq:Strassen1}) and the linear growth
assumption on $\Psi$,
\begin{equation}
  \label{eq:ProofUtilInteg1}
  \nu\in\DC^\medcirc\cap\ca(\RB^N)^+\,\Rightarrow S_i\in L_1(\nu),\text{ so }\Psi, H\bullet S_N\in L_1(\nu),
  \,\forall H\in\HC.
\end{equation}
Consequently, on $\DC^\medcirc\ca(\RB^N)^+$, (\ref{eq:UtilIntegConj1})
simplifies to
\begin{equation}
  \label{eq:UtilConjBetter1}
  I^*_{\varphi_{H,\Psi},\PC}(\nu)=J_{V,\PC}(\nu)+\nu(H\bullet S_N-\Psi),\quad \nu\in\DC^\medcirc\cap\ca(\RB^N)^+.
\end{equation}

\begin{proof}[Proof of Theorem~\ref{thm:DiscreteDuality}]
  Replacing $\Psi$ by $\Psi-x$, which does not affect the assumptions
  on $\Psi$, it suffices to prove the case of $x=0$. In this case,
  Corollary~\ref{cor:MetaDuality} and Lemma~\ref{lem:FinInteg1} show
  that
  \begin{align*}
    \sup_{f\in\DC} \inf_{P\in\PC}\EB_P[U(f+H\bullet S_N-\Psi)]
    =-\inf_{f\in -\DC}I_{\varphi_{H,\Psi},\PC}(f)
    =\min_{\nu\in\DC^\medcirc}I^*_{\varphi_{H,\Psi},\PC}(\nu).
  \end{align*}
  Since $\dom(I_{\varphi_{H,\Psi},\PC})=\dom(J_{V,\PC})$, we deduce
  that
  \begin{align*}
    \min_{\nu\in\DC^\medcirc}I^*_{\varphi_{H,\Psi},\PC}(\nu)
    =\min_{\nu\in\DC^\medcirc\cap\dom(J_{V,\PC})}I^*_{\varphi_{H,\Psi},\PC}(\nu)
    \stackrel{\text{(\ref{eq:ProofUtilNonEmpty1})}}<\infty.
  \end{align*}
  Then note that
  $\{\nu\in \DC^\medcirc\cap\dom(J_{V,\PC}):
  I_{\varphi_{H,\Psi},\PC}^*(\nu)\leq c\} =\DC^\medcirc\cap\{\nu\in
  \ca(\RB^N): I_{\varphi_{H,\Psi},\PC}^*(\nu)\leq c\}$, $c\in\RB$, are
  convex and $\sigma(\ca(\RB^N),C_b(\RB^N))$-compact since
  $I_{\varphi_{H,\Psi},\PC}$ is
  $\tau(C_b(\RB^N),\ca(\RB^N))$-continuous, and
  $H\mapsto I^*_{\varphi_{H,\Psi},\PC}(\nu)$ is concave for each
  $\nu\in \DC^\medcirc\cap\dom(J_{V,\PC})$. Thus the lop-sided minimax
  theorem (\citep[Th.6.2.7 on p.319]{MR749753}) shows
  \begin{align*}
      \sup_{H\in\HC}
      &\min_{\nu\in\DC^\medcirc\cap\dom(J_{V,\PC})}I^*_{\varphi_{H,\Psi},\PC}(\nu)
        = \min_{\nu\in\DC^\medcirc\cap\dom(J_{V,\PC})}\sup_{H\in\HC}I^*_{\varphi_{H,\Psi},\PC}(\nu)\\
      &\stackrel{\text{(\ref{eq:UtilConjBetter1})}}=
        \min_{\nu\in\DC^\medcirc\cap\dom(J_{V,\PC})}
        \left(J_{V,\PC}(\nu)+\sup_{H\in\HC}\nu(H\bullet S_N)
        -\nu(\Psi)\right)
        =:(*).
  \end{align*}
  Then note that for $\nu\in\DC^\medcirc\cap\dom(J_{V,\PC})$,
  $\sup_{H\in\HC}\nu(H\bullet S_N)\in\{0,\infty\}$ since $\HC$ is
  linear (and $S_i\in L_1(\nu)$ by (\ref{eq:ProofUtilInteg1})), and by
  (\ref{eq:MGMeasDiscrete1}), it is $0$ iff $\nu=\alpha Q$ for a
  martingale measure $Q$ for $S$ and $\alpha\geq 0$. Summing up with
  (\ref{eq:ProofUtilMarginal1}) (and
  $\dom(J_{V,\PC})\subset\ca(\RB^N)^+$),
  \begin{align*}
    (*)&=\min_{\lambda\geq 0,Q\in\MC_\mu}\left(J_{V,\PC}(\lambda Q)-\lambda \EB_Q[\Psi]\right).
  \end{align*}
  
  We complete the proof by showing that the minimum on the RHS is
  attained by a non-zero $\lambda Q$. To see this, note first that
  $J_{V,\PC}(0)-0(\Psi)=V(0)$. Pick, by (\ref{eq:UtilFinite1}), a
  $Q\in\MC_\mu$ with $J_V(\lambda Q|P)<\infty$ for some $\lambda>0$
  and $P\in\PC$; then $Q\ll P$, so $P(dQ/dP>0)>0$.  Putting
  $\eta:=\lambda dQ/dP$,
  $\alpha \mapsto G(\alpha):=V(\alpha\eta)-\alpha\eta \Psi$ is
  (finite-valued and) convex. Since
  $V'(0)=\lim_{\downarrow 0}V'(\alpha)=-\infty$,
  $\frac{G(\alpha)-G(0)}\alpha\downarrow -\infty\ind_{\{\eta>0\}}$,
  and since $G(1)=V(\eta)-\eta\Psi\in L_1(P)$, we deduce that
  \begin{align*}
    \frac{  J_V(\alpha\nu|P)-\alpha\nu(\Psi)-V(0)}\alpha
    =
    \EB_P\left[\frac{G(\alpha)-G(0)}\alpha\right]\downarrow -\infty.
  \end{align*}    
  Thus $J_V(\alpha \nu|P)-\alpha \nu(\Psi)<V(0)$ for some
  $\alpha>0$.
\end{proof}

For the proof of Corollary~\ref{cor:CallDuality}, we need a simple lemma. Let
\begin{align*}
  \DC_\mathrm{call}
  =\left\{\sum\nolimits_{i\leq N} (f_i(S_i)-\mu(f_i)): f_i\in \SC_\mathrm{call}\right\}.
\end{align*}

\begin{lemma}
  \label{lem:MackeyApprox}
  Any $\psi\in \DC$ admits a sequence $(\psi_n)_n$ in
  $\DC_\mathrm{call}\cap C_b(\RB)$ with $\psi_n\rightarrow\psi$ in
  $\tau(C_b(\RB^N),\ca(\RB^N))$.
\end{lemma}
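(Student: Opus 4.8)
The statement asserts that any $\psi\in\DC$ (a gain from bounded continuous static positions) can be Mackey-approximated by elements of $\DC_\mathrm{call}\cap C_b(\RB)$, i.e.\ by bounded gains built from calls only. Since $\DC$ and $\DC_\mathrm{call}$ are both spanned over $i\leq N$ by single-maturity pieces $f_i(S_i)-\mu_i(f_i)$, and the Mackey topology is a vector topology, it suffices to treat a single maturity: given $f\in C_b(\RB)$, I would produce bounded piecewise-linear functions $g_n$ with $g_n(S_i)-\mu_i(g_n)\to f(S_i)-\mu_i(f)$ in $\tau(C_b(\RB^N),\ca(\RB^N))$, using the remark already recorded in the text that every \emph{bounded} piecewise-linear function lies in $\SC_\mathrm{call}+\RB$, so that $g_n(S_i)-\mu_i(g_n)\in\DC_\mathrm{call}$ (the additive constant cancels in the centred gain). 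Summing the $N$ single-maturity approximations then gives the general case.

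**Reducing Mackey convergence to a concrete criterion.**
The key is to recast $\tau(C_b(\RB^N),\ca(\RB^N))$-convergence in usable terms. By the definition recalled in Section~\ref{sec:IntegPrelim}, $\tau(C_b(\RB^N),\ca(\RB^N))$ is the topology of uniform convergence on $\sigma(\ca(\RB^N),C_b(\RB^N))$-compact absolutely convex sets, and by Prokhorov's theorem (Lemma~\ref{lem:Prokhorov}) these are exactly the norm-bounded, uniformly tight sets $\Lambda\subset\ca(\RB^N)$. So I must show
\begin{equation*}
  \sup_{\nu\in\Lambda}\bigl|\nu(\psi_n)-\nu(\psi)\bigr|\longrightarrow 0
\end{equation*}
for every such $\Lambda$. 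The standard device is: a uniformly bounded sequence $h_n\in C_b(\RB^N)$ converging pointwise to $h\in C_b(\RB^N)$ converges to $h$ in $\tau(C_b(\RB^N),\ca(\RB^N))$. Indeed, fix $\varepsilon>0$ and choose, by tightness, a compact $K$ with $\sup_{\nu\in\Lambda}|\nu|(K^c)<\varepsilon$; on the compact $K$ the pointwise convergence of the equibounded $(h_n)$ is uniform (a Dini/Arzel\`a-type argument, or simply bound by $\sup_{\nu}\|\nu\|\cdot\sup_K|h_n-h| + 2\sup_n\|h_n\|_\infty\,\varepsilon$), whence the supremum over $\Lambda$ is controlled. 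Thus the whole problem collapses to exhibiting, for a fixed $f\in C_b(\RB)$, bounded piecewise-linear $g_n$ with $\sup_n\|g_n\|_\infty<\infty$ and $g_n\to f$ pointwise on $\RB$, and then checking $\mu_i(g_n)\to\mu_i(f)$ so that the centred gains converge pointwise as functions on $\RB^N$.

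**The single-maturity approximation.**
For $f\in C_b(\RB)$, take $g_n$ to be the piecewise-linear interpolation of $f$ on the grid $\{k/n:k\in\ZB\}\cap[-n,n]$, extended by the constant values $f(\pm n)$ outside $[-n,n]$. Each $g_n$ is bounded with $\|g_n\|_\infty\leq\|f\|_\infty$, is piecewise-linear, and $g_n\to f$ pointwise (in fact locally uniformly by uniform continuity of $f$ on compacts). Then $\omega\mapsto g_n(\omega_i)$ is a uniformly bounded sequence in $C_b(\RB^N)$ converging pointwise to $\omega\mapsto f(\omega_i)$, and since $\mu_i$ is a probability measure, dominated convergence gives $\mu_i(g_n)\to\mu_i(f)$. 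Hence $g_n(S_i)-\mu_i(g_n)$ is a uniformly bounded sequence in $C_b(\RB^N)$ converging pointwise to $f(S_i)-\mu_i(f)$, so it converges in the Mackey topology by the criterion above; and each term lies in $\DC_\mathrm{call}\cap C_b(\RB^N)$ because the bounded piecewise-linear $g_n$ is in $\SC_\mathrm{call}+\RB$ and the constant is absorbed by the centring.

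**Main obstacle.**
The only genuine subtlety is the passage from pointwise (non-uniform) convergence to Mackey convergence on the non-compact space $\RB^N$; pointwise convergence alone is weaker than $\sigma(C_b,\ca)$-convergence in general, and it is precisely uniform tightness (Prokhorov) together with the uniform bound $\sup_n\|g_n\|_\infty<\infty$ that upgrades it. Everything else—linearity to reduce to one maturity, the interpolation construction, and $\mu_i(g_n)\to\mu_i(f)$—is routine. I would therefore present the bounded-pointwise-implies-Mackey lemma first (or invoke it inline), then give the interpolation, and finally sum over $i\leq N$.
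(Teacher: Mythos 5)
Your overall architecture — reduction to a single maturity, piecewise-linear interpolation, Prokhorov's characterisation of the compact sets — matches the paper's proof, but the bridging criterion you invoke is false as stated, and this is a genuine gap. You claim as a ``standard device'' that a uniformly bounded sequence $h_n\in C_b(\RB^N)$ converging \emph{pointwise} to $h$ converges in $\tau(C_b(\RB^N),\ca(\RB^N))$, and you justify it by asserting that pointwise convergence of an equibounded sequence of continuous functions on a compact set is uniform. That assertion is wrong: Dini's theorem needs monotonicity and Arzel\`a--Ascoli needs equicontinuity, and the moving-bump sequence $h_n(\omega)=(1-n|\omega_1-1/n|)^+$ is continuous, bounded by $1$, converges pointwise to $0$, yet has $\sup_{[0,1]^N}h_n=1$ for every $n$. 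Moreover the failure is of the criterion itself, not merely of your proof of it: let $\Lambda$ be the $\sigma(\ca(\RB^N),C_b(\RB^N))$-closure of the absolutely convex hull of $\{\delta_x:x\in[0,1]^N\}$, which is norm-bounded and uniformly tight, hence compact and absolutely convex by Lemma~\ref{lem:Prokhorov}; then $\sup_{\nu\in\Lambda}|\nu(h_n)-\nu(h)|\geq\sup_{\omega\in[0,1]^N}|h_n(\omega)-h(\omega)|$, so Mackey convergence forces uniform convergence on compacta, which bounded pointwise convergence does not provide. This is exactly the point where $C_b(\Omega)$ differs from $L_\infty(\PB)$: the analogous device for $\tau(L_\infty,L_1)$ (Grothendieck) rests on Egorov plus uniform integrability, with the exceptional set allowed to depend on the measure, and the paper explicitly warns in Section~\ref{sec:MainInteg} that this technique is unavailable here.

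The gap is repairable, and your own construction supplies the repair: the interpolants $g_n$ converge to $f$ \emph{uniformly on compact sets} (as you note parenthetically), not merely pointwise, and with locally uniform convergence your inline estimate $\sup_{\nu\in\Lambda}\|\nu\|\cdot\sup_K|h_n-h|+2\sup_n\|h_n\|_\infty\,\varepsilon$ is valid and closes the argument, since the compact $K$ from tightness is contained in some $[-M,M]^N$ and $|\mu_i(g_n-f)|\rightarrow 0$. Restated with ``uniformly on compacts'' in place of ``pointwise,'' your criterion is precisely the paper's argument: there one takes $|g-g_n|\leq 1/n$ on $[-n,n]$, extends by constants, and bounds $\sup_{\nu\in\Lambda}|\nu(g\circ S_i-g_n\circ S_i)|$ by $\frac1n\sup_{\nu\in\Lambda}\|\nu\|+2\|g\|_\infty\sup_{\nu\in\Lambda}|\nu|(\RB^N\setminus[-n,n]^N)$, both terms vanishing by norm-boundedness and uniform tightness. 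So: correct construction and correct endgame, but the lemma you lean on must be weakened from pointwise to locally uniform convergence, since as stated it is false.
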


\begin{proof}
  Since $\DC$ consists of functions
  $\sum_{i\leq N}(f_i\circ S_i-\mu_i(f_i))$ with $f_i\in C_b(\RB)$,
  and $(f_i+a)\circ S_i-\mu_i(f_i+a)=f_i\circ S_i-\mu_i(f_i)$ if $a$
  is a constant, it suffices to show that each $g\in C_b(\RB)$ admits
  a sequence $(g_n)_n$ of bounded piecewise linear functions on $\RB$
  such that $g_n\circ S_i-\mu_i(g_n)\rightarrow g\circ S_i-\mu_i(g)$
  in $\tau(C_b(\RB^N),\ca(\RB^N))$. So fix $g\in C_b(\RB)$. For each
  $n$, $g$ is uniformly continuous on $[-n,n]$, so one can find a
  piecewise linear function $g_n:[-n,n]\rightarrow\RB$ with
  $|g-g_n|\leq 1/n$ on $[-n,n]$. Extend $g_n$ to the entire $\RB$ by
  setting $g(x)=g(-n)$ (resp. $g(n)$) if $x<-n$ (resp. $>n$), which is
  piecewise linear and $|g_n|\leq 2\|g\|_\infty$ on
  $\RB\setminus [-n,n]$. Now for each $i\leq N$,
  \begin{align*}
    |\mu_i(g-g_n)|\leq\frac1n+2\|g\|_\infty\mu_i\left(\RB\setminus[-n,n]\right)\rightarrow 0.
  \end{align*}
  Also, for any $\sigma(\ca(\RB^N),C_b(\RB^N))$-compact
  ($\Leftrightarrow$ bounded uniformly tight)
  $\Lambda\subset \ca(\RB^N)$,
  \begin{align*}
    \sup_{\nu\in\Lambda}|\nu(g\circ S_i-g_n\circ S_i)|
    \leq \frac1n\sup_{\nu\in \Lambda}\underbrace{|\nu|\left([-n,n]^N\right)}_{\leq\|\nu\|}
    + 2\|g\|_\infty\sup_{\nu\in\Lambda}|\nu|\left(\RB^N\setminus[-n,n]^N\right).
  \end{align*}
  By the uniform tightness, the RHS tends to $0$ as
  $n\rightarrow\infty$. This shows that
  $g_n\circ S_i\rightarrow g\circ S_i$ in
  $\tau(C_b(\RB^N),\ca(\RB^N))$. Summing up,
  $g_n\circ S_i-\mu_i(g_n)\rightarrow g\circ S_i-\mu_i(g)$ in
  $\tau(C_b(\RB^N),\ca(\RB^N))$.
\end{proof}

\begin{proof}[Proof of Corollary~\ref{cor:CallDuality}]
  Since
  $f\mapsto \inf_{P\in\PC}\EB_P[U(f+H\bullet
  S_N-\Psi)]=-I_{\varphi_{H,\Psi},\PC}(-f)$ is
  $\tau(C_b(\RB^N),\ca(\RB^N))$-continuous on $C_b(\RB^N)$ for each
  $H\in\HC$, Lemma~\ref{lem:MackeyApprox} yields
\begin{align*}
u_\Psi(0)=  \sup_{H\in\HC}\sup_{\psi\in \DC}-I_{\varphi_{H,\Psi},\PC}(-\psi)
  &\leq\sup_{H\in\HC}\sup_{\psi\in \DC_\mathrm{call}}-I_{\varphi_{H,\Psi},\PC}(-\psi)\\
  &\leq\min_{\lambda>0,Q\in\MC_\mu}\left(J_{V,\PC}(\lambda Q)-\lambda\EB_Q[\Psi]\right)
    =u_\Psi(0),
\end{align*}
where in the second line, note that any $g\in\SC_\mathrm{call}$ (is
Lipschitz, hence) has a linear growth, so $g\in L_1(\mu_i)$, thus
Corollary~\ref{cor:DualityLargerAdm} proves the second ``$\leq$.''
\end{proof}

\def\cprime{$'$}

\end{document}